\newcommand{\re}{\mathop{\mathrm{Re}}}
\newcommand{\im}{\mathop{\mathrm{Im}}}
\newcommand{\artanh}{\mathop{\mathrm{artanh}}}
\newtheorem{theorem}{Theorem}
\newtheorem{lemma}{Lemma}
\newtheorem{remark}{Remark}
\newtheorem{corollary}{Corollary}
\title[]{A note on the Schwarz lemma for harmonic functions}
\author{Marek Svetlik}
\address{Faculty of mathematics, University of Belgrade, Studentski Trg 16,
Belgrade, Republic of Serbia} \email{svetlik@matf.bg.ac.rs}
\date{November 8, 2019}
\keywords{The Schwarz lemma; the Schwarz-Pick lemma; harmonic functions}
\subjclass[2010]{Primary 30C80; Secondary 31C05, 30C75.}
\thanks{Research partially supported by Ministry of Education, Science and Technological Development
of the Republic of Serbia, Grant No. 174 032.}
\begin{document}

\maketitle

\begin{abstract}
In this note we  consider some generalizations of the Schwarz lemma for harmonic functions on the unit disk, whereby values of such functions and  the norms of their differentials at the point $z=0$ are given.
\end{abstract}

\section{Introduction}
%

\subsection{A summary of some results}

In this paper we consider some generalizations of the Schwarz lemma for harmonic functions from the unit disk $\mathbb{U}=\{z\in\mathbb{C}:|z|<1\}$ to the interval $(-1,1)$ (or to itself).

First, we cite a theorem which is known as the Schwarz lemma for harmonic functions and is considered as a classical result.

\begin{theorem}[\cite{heinz},{\cite[p.77]{duren}}]\label{th:schwhar}
Let $f:\mathbb{U}\rightarrow\mathbb{U}$ be a harmonic function such that $f(0)=0$. Then
\begin{equation*}
    |f(z)|\leqslant\frac{4}{\pi}\arctan{|z|}, \quad \mbox{ for all } \quad z\in\mathbb{U},
\end{equation*}
and this inequality is sharp for each point $z\in\mathbb{U}$.
\end{theorem}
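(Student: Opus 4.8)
The plan is to first reduce the complex-valued statement to a one-dimensional (real) statement by rotating the target, and then to treat the real case by passing to a holomorphic function via the harmonic conjugate and invoking the classical Schwarz lemma.

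\textbf{Step 1 (reduction to the real case).} Fix $z_0\in\mathbb{U}$ and choose $\theta\in\mathbb{R}$ with $e^{-i\theta}f(z_0)=|f(z_0)|$. I set $u=\re\big(e^{-i\theta}f\big)$. Then $u$ is real harmonic, $u(0)=0$, $u(\mathbb{U})\subseteq(-1,1)$ since $|u|\le|f|<1$, and $u(z_0)=|f(z_0)|$. Hence it suffices to bound $u(z_0)$ for an arbitrary real harmonic $u:\mathbb{U}\to(-1,1)$ with $u(0)=0$.

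\textbf{Step 2 (passage to a holomorphic map).} On the simply connected disk I would take the harmonic conjugate $v$ of $u$ normalized by $v(0)=0$, so that $F=u+iv$ is holomorphic with $F(0)=0$ and $F(\mathbb{U})$ contained in the vertical strip $S=\{w:|\re w|<1\}$. The map $\psi(w)=\tan\!\big(\tfrac{\pi w}{4}\big)$ is a conformal bijection of $S$ onto $\mathbb{U}$ with $\psi(0)=0$, so $\psi\circ F:\mathbb{U}\to\mathbb{U}$ is holomorphic and fixes $0$. The classical Schwarz lemma then gives
\begin{equation*}
\Big|\tan\!\Big(\tfrac{\pi}{4}F(z)\Big)\Big|\le|z|,\qquad z\in\mathbb{U}.
\end{equation*}

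\textbf{Step 3 (extracting the bound) and sharpness.} Writing $x=\tfrac{\pi}{4}u(z_0)$ and $y=\tfrac{\pi}{4}v(z_0)$ and using the elementary identity
\begin{equation*}
\big|\tan(x+iy)\big|^2=\frac{\cosh 2y-\cos 2x}{\cosh 2y+\cos 2x},
\end{equation*}
I observe that for $|x|<\tfrac{\pi}{4}$ one has $\cos 2x>0$, so the right-hand side is an increasing function of $\cosh 2y$ and therefore attains its minimum over $y$ at $y=0$, where it equals $\tan^2 x$. Combining this with the Schwarz estimate from Step~2 yields $\tan^2 x\le|z_0|^2$, i.e.\ $|x|\le\arctan|z_0|$, which is exactly $|u(z_0)|\le\tfrac{4}{\pi}\arctan|z_0|$; by Step~1 this is the desired inequality. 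The monotonicity argument is the only genuinely delicate point, and it is the main obstacle in the sense that the estimate on $\re F(z_0)=u(z_0)$ must be separated from the uncontrolled imaginary part $\im F(z_0)$. For sharpness I would exhibit the extremal real harmonic function $u_0(z)=\tfrac{2}{\pi}\im\log\tfrac{1+z}{1-z}$, which maps $\mathbb{U}$ into $(-1,1)$, vanishes at $0$, and satisfies $u_0(ir)=\tfrac{4}{\pi}\arctan r$; rotating it as $z\mapsto u_0\big(e^{i(\pi/2-\arg z_0)}z\big)$ produces, for every prescribed $z_0$, an admissible function attaining equality at $z_0$.
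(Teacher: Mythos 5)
Your proof is correct. Note that the paper itself does not prove this theorem at all --- it is quoted as a classical result of Heinz/Duren, and the paper's own machinery is devoted to generalizations (Theorems \ref{th:schwhar1}, \ref{th:main1}, \ref{th:main2}) of which it is the special case $b=0$, $c=1$. Your argument is nevertheless in the same spirit as the ``strip method'' the paper outlines: you lift the real harmonic function $u$ to a holomorphic $F:\mathbb{U}\rightarrow\mathbb{S}$ with $\re F=u$, transplant to the disk by $w\mapsto\tan(\pi w/4)$, and invoke the classical Schwarz lemma. The differences are in the two surrounding steps, and they are instructive. Where the paper's proofs extract the bound on $\re F(z_0)$ geometrically --- the image point lies in a hyperbolic disk of $\mathbb{S}$, and Lemma \ref{lem:hypdisk0} (symmetry about the real axis plus Euclidean convexity of such disks) identifies its exact projection onto the real line --- your Step 3 substitutes the explicit identity $|\tan(x+iy)|^2=(\cosh 2y-\cos 2x)/(\cosh 2y+\cos 2x)$ together with monotonicity in $\cosh 2y$; this is an elementary, self-contained replacement for that lemma, and it is sound (for $|x|<\pi/4$ one indeed has $\cos 2x>0$, and the minimum over $y$ occurs at $y=0$ with value $\tan^2x$). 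Your Step 1 rotation $u=\re(e^{-i\theta}f)$ is exactly the device the paper uses, via Lemma \ref{lem:colonna}, to pass from real to complex targets in the proof of Theorem \ref{th:main2}. Finally, your extremal function $u_0(z)=\tfrac{2}{\pi}\im\log\tfrac{1+z}{1-z}$ equals $\re\phi(-iz)$, i.e.\ a rotation of the paper's extremal $\re\phi$, so the sharpness argument matches as well. What your route buys is self-containedness: no hyperbolic distance, no Beardon--Carne, only the classical Schwarz lemma and a trigonometric computation. What the paper's hyperbolic-geometric route buys is scalability: it handles the refinements with prescribed $u(0)=b$ and prescribed $|\nabla u(0)|$, where the center of the hyperbolic disk moves off $0$ and the radius shrinks via Beardon--Carne, situations in which your explicit computation would become considerably less pleasant.
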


In 1977, H. W. Hethcote \cite{heth} improved this result by removing the assumption $f(0)=0$ and proved the following theorem.

\begin{theorem}[{\cite[Theorem 1]{heth}} and {\cite[Theorem 3.6.1]{pavlovic}}]\label{th:schwheth}
Let $f:\mathbb{U}\rightarrow\mathbb{U}$ be a harmonic function. Then
\begin{equation*}
    \left|f(z) -\frac{1-|z|^2}{1 + |z|^2} f(0)\right|\leqslant \frac{4}{\pi}\arctan|z|, \quad \mbox{ for all } \quad z\in\mathbb{U}.
\end{equation*}
\end{theorem}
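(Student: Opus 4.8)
My plan is to bypass any reduction to Theorem~\ref{th:schwhar} (precomposition by disk automorphisms only moves the domain point and cannot normalize $f(0)$ to $0$, while postcomposition by a disk automorphism destroys harmonicity) and instead argue directly from the Poisson integral representation of $f$. First I would dispose of the boundary-regularity issue by a dilation trick: for $\rho\in(0,1)$ set $f_\rho(z)=f(\rho z)$, which is harmonic on a neighbourhood of $\overline{\mathbb U}$, continuous up to the boundary, and still satisfies $|f_\rho|<1$. Hence the classical Poisson formula applies verbatim,
\[f_\rho(z)=\frac{1}{2\pi}\int_0^{2\pi}\frac{1-|z|^2}{|e^{i\theta}-z|^2}\,f(\rho e^{i\theta})\,d\theta,\qquad z\in\mathbb U,\]
so that $f_\rho(0)=\frac{1}{2\pi}\int_0^{2\pi}f(\rho e^{i\theta})\,d\theta$. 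Writing $r=|z|$, a change of the integration variable lets me reduce the computation to $z=r$, and the combination in the theorem becomes $f_\rho(z)-\frac{1-r^2}{1+r^2}f_\rho(0)=\frac{1}{2\pi}\int_0^{2\pi}K_r(\theta)\,f(\rho e^{i\theta})\,d\theta$ with $K_r(\theta)=\frac{1-r^2}{1-2r\cos\theta+r^2}-\frac{1-r^2}{1+r^2}$.

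The point of subtracting \emph{exactly} $\frac{1-r^2}{1+r^2}f(0)$ is that a short algebraic simplification gives $K_r(\theta)=\frac{2r(1-r^2)\cos\theta}{(1+r^2)(1-2r\cos\theta+r^2)}$, so $\operatorname{sgn}K_r=\operatorname{sgn}\cos\theta$. Applying the triangle inequality for integrals together with $|f(\rho e^{i\theta})|<1$ then yields the clean estimate
\[\left|f_\rho(z)-\frac{1-r^2}{1+r^2}f_\rho(0)\right|\le\frac{1}{2\pi}\int_0^{2\pi}|K_r(\theta)|\,d\theta.\]
It remains to evaluate the last integral. I would fold the integral over the two half-periods on which $\cos\theta$ has constant sign (substituting $\theta\mapsto\theta+\pi$ on the negative part), place the two resulting Poisson-type denominators over the common denominator $(1-r^2)^2+4r^2\sin^2\theta$, and then substitute $u=\sin\theta$. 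This collapses everything to the elementary integral $\int_{-1}^1\frac{du}{(1-r^2)^2+4r^2u^2}$, which produces an $\arctan$; invoking the double-angle identity $\arctan\frac{2r}{1-r^2}=2\arctan r$ the algebraic factors cancel and the integral equals exactly $\frac{4}{\pi}\arctan r$.

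Finally I would let $\rho\to1^-$: since $f$ is continuous, $f_\rho(z)\to f(z)$ and $f_\rho(0)\to f(0)$, while the right-hand side is independent of $\rho$, so the inequality passes to the limit and the theorem follows. The only genuinely delicate step is the evaluation of $\frac{1}{2\pi}\int_0^{2\pi}|K_r|\,d\theta$; everything hinges on the sign observation $\operatorname{sgn}K_r=\operatorname{sgn}\cos\theta$ (which is precisely what the coefficient $\frac{1-r^2}{1+r^2}$ is engineered to produce) and on recognizing the resulting antiderivative as a double-angle form of $\arctan r$. No sharpness assertion is needed here, so I would not have to exhibit the extremal boundary data $f^*=\pm e^{i\alpha}$, although that is exactly the data saturating the triangle inequality above.
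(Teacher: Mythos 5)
Your argument is correct, and its two computational hinges check out. Indeed,
\begin{equation*}
K_r(\theta)=\frac{1-r^2}{1-2r\cos\theta+r^2}-\frac{1-r^2}{1+r^2}
=\frac{2r(1-r^2)\cos\theta}{(1+r^2)\left(1-2r\cos\theta+r^2\right)},
\end{equation*}
so $K_r$ vanishes exactly where $\cos\theta$ does, and your folding of the integral and the substitution $u=\sin\theta$ give
\begin{equation*}
\frac{1}{2\pi}\int_0^{2\pi}\left|K_r(\theta)\right|d\theta
=\frac{2r(1-r^2)}{\pi}\int_{-1}^{1}\frac{du}{(1-r^2)^2+4r^2u^2}
=\frac{2}{\pi}\arctan\frac{2r}{1-r^2}
=\frac{4}{\pi}\arctan r,
\end{equation*}
as you claim; the dilation and the limit $\rho\to1^-$ are routine (in fact $f_\rho(0)=f(0)$, so only the left-hand side moves). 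However, there is no proof in the paper to compare against: Theorem~\ref{th:schwheth} is quoted there from Hethcote and from Pavlovi\'c's book, and what you have reconstructed is essentially that classical Poisson-kernel argument. The paper's own machinery for statements of this kind is genuinely different: it lifts a real harmonic $u$ to a holomorphic $F:\mathbb{U}\rightarrow\mathbb{S}$ with $\re F=u$ (the strip method) and then combines the Schwarz--Pick lemma (Theorem~\ref{th:schwpick}), the Beardon--Carne inequality (Theorem~\ref{th:BeardonCarne}) and the geometry of hyperbolic disks in the strip (Lemma~\ref{lem:hypdisk0}); that is how Theorems~\ref{th:schwhar1}, \ref{th:main1} and \ref{th:main2} are proved, with complex-valued conclusions deduced afterwards by the projection $U=\cos\theta\,u+\sin\theta\,v$. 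Comparing the two: your route is elementary and self-contained and reaches the complex-valued statement in one stroke, but the bound it produces is not sharp when $f(0)\neq0$ (equality in your triangle-inequality step forces the boundary data $\mathrm{sgn}(\cos\theta)$, hence $f(0)=0$), and the kernel method does not readily absorb derivative data; the paper's hyperbolic route yields the sharp bounds of Theorem~\ref{th:schwhar1} --- from which your inequality can also be recovered, at the cost of an extra monotonicity argument in $b$ plus the projection trick --- and extends to the refinements (R1)--(R2) and (C1)--(C2) that constitute the paper's main results.
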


As it was written in \cite{MMandMS}, it seems that researchers have had some difficulties to handle the case $f(0)\neq0$, where $f$ is harmonic mapping from $\mathbb{U}$ to itself. Before explaining the essence of these difficulties, it is necessary to recall one mapping and some of its properties. Also, emphasize that this mapping and its properties have an important role in our results.

Let $\alpha\in\mathbb{U}$ be arbitrary. Then for $z\in\mathbb{U}$ we define $\displaystyle\varphi_{\alpha}(z)=\frac{\alpha+z}{1+\overline{\alpha}z}$. It is well known that $\varphi_{\alpha}$ is a conformal automorphism of $\mathbb{U}$. Also, for $\alpha\in(-1,1)$ we have
\begin{itemize}
  \item[$1^\circ$] $\varphi_{\alpha}$ is increasing on $(-1,1)$ and maps $(-1,1)$ onto itself;
  \item[$2^\circ$] $\displaystyle\varphi_{\alpha}([-r,r])=[\varphi_{\alpha}(-r),\varphi_{\alpha}(r)]=\left[\frac{\alpha-r}{1-\alpha r},\frac{\alpha+r}{1+\alpha r}\right]$, where $r\in[0,1)$.
\end{itemize}

Now we can explain the mentioned difficulties. If $f$ is holomorphic mapping from $\mathbb{U}$ to $\mathbb{U}$, such that $f(0)=b$,  then using the mapping $g=\varphi_{-b}\circ f$ we can reduce described problem to the case $f(0)=0$. But, if  $f$ is harmonic mapping from $\mathbb{U}$ to $\mathbb{U}$ such that $f(0)=b$, then the mapping $g=\varphi_{-b}\circ f$ doesn't have to be harmonic mapping.

In joint work \cite{MMandMS} of the author with M. Mateljevi\'c, the Theorem \ref{th:schwhar} was proved in a different way than those that could be found in the literature (for example, see \cite{heinz, duren}). Modifying that proof in an obvious way, the following theorem (which can be considered as an improvement of the H. W. Hethcote result) has also been proved in \cite{MMandMS}.




\begin{theorem}[{\cite[Theorem 6]{MMandMS}}]\label{th:schwhar1}
Let $u:\mathbb{U}\rightarrow(-1,1)$ be a harmonic function such that $u(0)=b$. Then
\begin{equation*}
\frac{4}{\pi}\arctan\varphi_a(-|z|)\leqslant u(z)\leqslant  \frac{4}{\pi}\arctan\varphi_a(|z|), \quad \mbox{for all} \quad z\in\mathbb{U}.
\end{equation*}
Here $\displaystyle a=\tan{\frac{b\pi}{4}}$. Also, these inequalities are both sharp at each point $z\in\mathbb{U}$.
\end{theorem}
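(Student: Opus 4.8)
The plan is to reduce everything to a real argument and then to solve an extremal problem for the Poisson integral. First I would observe that it suffices to prove the upper bound $u(z)\le\frac{4}{\pi}\arctan\varphi_a(|z|)$: the lower bound follows by applying the upper bound to $-u$, which is harmonic, maps $\mathbb{U}$ into $(-1,1)$, satisfies $(-u)(0)=-b$ and has associated parameter $\tan\frac{-b\pi}{4}=-a$, together with the oddness of $\arctan$ and the identity $\varphi_{-a}(|z|)=-\varphi_a(-|z|)$ (which is immediate from property $2^\circ$). Moreover, for fixed $z=|z|e^{i\theta_0}$ the function $w\mapsto u(e^{i\theta_0}w)$ is again harmonic into $(-1,1)$, takes the value $b$ at the origin and the value $u(z)$ at the real point $|z|$; hence it is enough to bound $u(r)$ for $r=|z|\in[0,1)$.

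Since $u$ is a bounded harmonic function on $\mathbb{U}$, by Fatou's theorem it is the Poisson integral of its boundary function $\phi\in L^\infty$, and $|u|<1$ forces $|\phi|\le1$ a.e. Thus
\[
u(r)=\frac{1}{2\pi}\int_0^{2\pi}P(r,e^{i\theta})\,\phi(e^{i\theta})\,d\theta,\qquad P(r,e^{i\theta})=\frac{1-r^2}{1-2r\cos\theta+r^2},
\]
while the normalization $u(0)=b$ reads $\frac{1}{2\pi}\int_0^{2\pi}\phi\,d\theta=b$. The problem thus becomes: maximize the linear functional $\phi\mapsto\frac{1}{2\pi}\int_0^{2\pi}P(r,\cdot)\,\phi$ over all $\phi$ with $|\phi|\le1$ and prescribed mean $b$.

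Next I would identify the extremal $\phi$ by a bathtub-principle argument. Because $\theta\mapsto P(r,e^{i\theta})$ is even and strictly decreasing on $[0,\pi]$, its superlevel sets are arcs centered at $\theta=0$. The mean constraint fixes the measure of $\{\phi=1\}$ to be $\pi(1+b)$, so the candidate extremal is $\phi^{*}=\chi_{A}-\chi_{A^{c}}$ with $A=(-t_0,t_0)$ and $t_0=\frac{\pi(1+b)}{2}$. For any competitor $\phi$ of the same mean, setting the threshold $\lambda=P(r,e^{it_0})$ and splitting $\int P(\phi^{*}-\phi)$ over $A$ and $A^{c}$ gives $\int P(\phi^{*}-\phi)\ge\lambda\int(\phi^{*}-\phi)=0$, since $\phi^{*}-\phi\ge0$ where $P\ge\lambda$ and $\phi^{*}-\phi\le0$ where $P\le\lambda$. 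Hence $\phi^{*}$ is the maximizer, and its Poisson integral is the extremal function; in the interior it maps into the open interval $(-1,1)$ by the maximum principle, since $b\in(-1,1)$ makes $\phi^{*}$ nonconstant, and rotating this extremal yields sharpness at every $z$.

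Finally I would evaluate $u(r)$ for this $\phi^{*}$. Using the antiderivative $\int P(r,e^{i\theta})\,d\theta=2\arctan\!\left(\frac{1+r}{1-r}\tan\frac{\theta}{2}\right)$, the Poisson integral of $\chi_{A}$ at $r$ equals $\frac{2}{\pi}\arctan\!\left(\frac{1+r}{1-r}\tan\frac{t_0}{2}\right)$, so $u(r)=\frac{4}{\pi}\arctan\!\left(\frac{1+r}{1-r}\tan\frac{t_0}{2}\right)-1$. Since $\frac{t_0}{2}=\frac{\pi}{4}+\frac{b\pi}{4}$, the addition formula gives $\tan\frac{t_0}{2}=\frac{1+a}{1-a}$ with $a=\tan\frac{b\pi}{4}$; then writing $1=\frac{4}{\pi}\cdot\frac{\pi}{4}$ and using $\arctan X-\arctan1=\arctan\frac{X-1}{X+1}$ collapses the expression to $\frac{4}{\pi}\arctan\frac{r+a}{1+ar}=\frac{4}{\pi}\arctan\varphi_a(r)$, as claimed. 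The main work is twofold: justifying the Poisson representation with $|\phi|\le1$ and rigorously running the bathtub principle (the genuinely delicate step, since one must verify the superlevel sets are arcs and handle the mean constraint), after which the closed-form evaluation is a routine, if slightly intricate, trigonometric simplification.
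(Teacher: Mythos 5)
Your proof is correct, but it follows a genuinely different route from the paper's. The paper (following \cite{MMandMS}, and as recapitulated here in the proof of Theorem \ref{th:main1}) uses the strip method: one associates to $u$ a holomorphic function $F:\mathbb{U}\rightarrow\mathbb{S}$ with $\re F=u$ and $F(0)=b$, applies the Schwarz--Pick lemma to conclude $d_{\mathbb{S}}(F(z),b)\leqslant d_{\mathbb{U}}(z,0)$, so that $F(z)$ lies in the hyperbolic disk $\overline{S}_{\lambda(|z|)}(b)$, and then invokes the geometric Lemma \ref{lem:hypdisk0} (symmetry and Euclidean convexity of hyperbolic disks in $\mathbb{S}$) to read off that the real projection of this disk is exactly $\left[\frac{4}{\pi}\arctan\varphi_a(-|z|),\frac{4}{\pi}\arctan\varphi_a(|z|)\right]$; sharpness comes from the explicit extremals $\zeta\mapsto\re\phi\big(\varphi_a(\pm e^{-i\arg z}\zeta)\big)$. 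You instead run the classical boundary-integral argument: Poisson representation of the bounded harmonic function, a bathtub-principle maximization of $\int P(r,\cdot)\phi$ under the constraints $|\phi|\leqslant1$ and $\frac{1}{2\pi}\int\phi=b$, and an explicit evaluation of the Poisson integral of the extremal two-valued step function. This is essentially the older Heinz--Hethcote style of proof that the authors of \cite{MMandMS} explicitly set out to avoid. Your extremal is in fact the same function as the paper's (the Poisson integral of $\chi_A-\chi_{A^c}$ is precisely $\re\phi(\varphi_a(\cdot))$ up to rotation), so the two proofs meet at the end. What your approach buys is self-containedness: it needs only Fatou's theorem and elementary trigonometric integration, with no hyperbolic geometry. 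What the paper's approach buys is robustness and generality: because it phrases everything in terms of hyperbolic distance and hyperbolic disks, it upgrades with no structural change to the situation where derivative data at the origin is also prescribed --- replacing Schwarz--Pick by the Beardon--Carne inequality (Theorem \ref{th:BeardonCarne}) immediately yields Theorem \ref{th:main1}, a refinement that would be considerably more painful to extract from the bathtub argument, where the gradient constraint on $\nabla u(0)$ does not translate into a linear constraint on the boundary function.
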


As one corollary of Theorem \ref{th:schwhar1} it is possible to prove the following theorem.

\begin{theorem}[{\cite[Theorem 1]{MMandAK}}]\label{th:schwar1complex}
Let $f:\mathbb{U}\rightarrow\mathbb{U}$ be a harmonic function such that $f(0)=b$. Then
\begin{equation*}
    |f(z)|\leqslant\frac{4}{\pi}\arctan\varphi_{A}(|z|), \quad \mbox{for all} \quad z\in\mathbb{U}.
\end{equation*}
Here $\displaystyle A=\tan{\frac{|b|\pi}{4}}$.

\end{theorem}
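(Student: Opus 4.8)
The plan is to deduce this result on the modulus of a complex-valued harmonic $f$ from the real-valued estimate of Theorem \ref{th:schwhar1}. The key observation is that for a fixed point $z_0\in\mathbb{U}$, the quantity $|f(z_0)|$ equals $\sup_{\theta}\re(e^{-i\theta}f(z_0))$, where the supremum is taken over all real $\theta$. Each function $u_\theta:=\re(e^{-i\theta}f)$ is a real-valued harmonic function on $\mathbb{U}$ with values in $(-1,1)$, since $|u_\theta(z)|\leqslant|f(z)|<1$. Thus the strategy is to apply the one-dimensional bound to each $u_\theta$ and then optimize over $\theta$; the main task is to control the dependence on the initial value $u_\theta(0)=\re(e^{-i\theta}b)$.

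First I would fix $z_0$ and write $b=|b|e^{i\beta}$, so that $u_\theta(0)=\re(e^{-i\theta}b)=|b|\cos(\theta-\beta)$. Applying Theorem \ref{th:schwhar1} to $u_\theta$ with $b_\theta:=u_\theta(0)$ in place of $b$, I get the upper bound
\begin{equation*}
u_\theta(z_0)\leqslant\frac{4}{\pi}\arctan\varphi_{a_\theta}(|z_0|),\qquad a_\theta=\tan\frac{b_\theta\pi}{4}.
\end{equation*}
Since $|b_\theta|=|b||\cos(\theta-\beta)|\leqslant|b|$, I next want to argue that this bound is largest, over all admissible $\theta$, precisely when $b_\theta$ is as large as possible, namely $b_\theta=|b|$ (achieved at $\theta=\beta$). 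This requires checking that the right-hand side is an increasing function of the parameter $b_\theta$: the map $t\mapsto\tan\frac{t\pi}{4}$ is increasing on $(-1,1)$, the map $a\mapsto\varphi_a(|z_0|)=\frac{a+|z_0|}{1+a|z_0|}$ is increasing in $a$ (property $1^\circ$ applied with the roles reversed, or a direct derivative computation), and $\arctan$ is increasing. Composing these monotonicities gives that the upper bound is increasing in $b_\theta$, hence maximized at $b_\theta=|b|$, where $a_\theta=\tan\frac{|b|\pi}{4}=A$.

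Combining the two steps yields, for every $\theta$,
\begin{equation*}
\re(e^{-i\theta}f(z_0))=u_\theta(z_0)\leqslant\frac{4}{\pi}\arctan\varphi_{A}(|z_0|),
\end{equation*}
and taking the supremum over $\theta$ on the left gives $|f(z_0)|\leqslant\frac{4}{\pi}\arctan\varphi_A(|z_0|)$. Since $z_0\in\mathbb{U}$ was arbitrary, this is the desired conclusion with $A=\tan\frac{|b|\pi}{4}$.

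The step I expect to be the main obstacle is the monotonicity argument that lets me replace $b_\theta$ by its maximal value $|b|$. The subtlety is that I am not simply plugging in the largest $\theta$, but rather asserting that among the family of valid one-dimensional estimates — one for each rotation $e^{-i\theta}$ — the weakest (largest right-hand side) occurs at the rotation aligning $b$ with the positive real axis. This hinges on the monotonic dependence of $\varphi_a(|z_0|)$ on the subscript $a$, which is a genuinely different direction of monotonicity from the one recorded in properties $1^\circ$ and $2^\circ$ (those concern monotonicity in the variable for fixed subscript). I would verify $\partial_a\varphi_a(r)=\frac{1-r^2}{(1+ar)^2}>0$ for $r\in[0,1)$ explicitly to close this gap cleanly, after which the chain of increasing maps delivers the result.
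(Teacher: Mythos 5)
Your proof is correct and takes essentially the paper's route: the paper states this theorem precisely as a corollary of Theorem \ref{th:schwhar1}, and your rotation argument $u_\theta=\re(e^{-i\theta}f)$ combined with monotonicity of $a\mapsto\varphi_a(|z_0|)$ in the subscript is exactly the mechanism the paper itself uses to pass from the real-valued Theorem \ref{th:main1} to the complex-valued Theorem \ref{th:main2} (there via $U=\cos\theta\,u+\sin\theta\,v$, Lemma \ref{lem:phiAincreasing}, and the choice of $\theta$ aligning $f(z)$ with the real axis). The monotonicity fact you verify by differentiation is literally the paper's Lemma \ref{lem:phiAincreasing}, so no gap remains.
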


This paper gives a relatively elementary contribution and continuation to the mentioned approach. We give further generalizations of Theorems \ref{th:schwhar1} and \ref{th:schwar1complex}. These generalizations (see Theorems \ref{th:main1} and \ref{th:main2}) consist of considering harmonic functions on the unit disk $\mathbb{U}$ with following additional conditions:
\begin{itemize}
  \item[1)] the value at the point $z=0$ is given; 
  \item[2)] the values of partial derivatives at the point $z=0$ are given. 
\end{itemize}

In the literature one can find the following two generalizations of the Schwarz lemma for holomorphic functions.

\begin{theorem}[{\cite[Proposition 2.2.2 (p. 32)]{krantz}}]\label{th:lindelof}
Let $f:\mathbb{U}\rightarrow\mathbb{U}$ be a holomorphic function. Then
\begin{equation}\label{fla1:krantz}
    |f(z)|\leqslant\frac{|f(0)|+|z|}{1+|f(0)||z|},\quad \mbox{for all} \quad z\in\mathbb{U}.
\end{equation}
\end{theorem}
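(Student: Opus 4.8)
The plan is to reduce everything to the classical Schwarz lemma, exploiting the feature mentioned in the excerpt: unlike in the harmonic case, composing a holomorphic self-map of $\mathbb{U}$ with a conformal automorphism again yields a holomorphic self-map of $\mathbb{U}$. Write $b=f(0)$. If $b=0$ the assertion is exactly the classical Schwarz lemma, so I may assume $b\neq 0$. Following the reduction recalled above, I would set $g=\varphi_{-b}\circ f$. Then $g$ is holomorphic, maps $\mathbb{U}$ into $\mathbb{U}$, and satisfies $g(0)=\varphi_{-b}(b)=\dfrac{-b+b}{1-|b|^2}=0$. The classical Schwarz lemma then gives $|g(z)|\leqslant|z|$ for all $z\in\mathbb{U}$.

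Since $\varphi_{b}$ and $\varphi_{-b}$ are mutually inverse, I recover $f=\varphi_b\circ g$, that is, $f(z)=\dfrac{b+g(z)}{1+\overline{b}\,g(z)}$. Fixing $z\in\mathbb{U}$ and writing $r=|z|$, the problem now reduces to estimating $|\varphi_b(w)|$ uniformly over the closed disk $\{w:|w|\leqslant r\}$, which contains the point $g(z)$. Thus it suffices to prove
\[
\max_{|w|\leqslant r}|\varphi_b(w)|=\frac{|b|+r}{1+|b|r}.
\]

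To establish this, I would compute
\[
|\varphi_b(w)|^2=\frac{|b|^2+2\re(\overline{b}w)+|w|^2}{1+2\re(\overline{b}w)+|b|^2|w|^2}.
\]
By the maximum modulus principle the supremum over $|w|\leqslant r$ is attained on the circle $|w|=r$, on which $|w|^2=r^2$ is constant and the only free quantity is $t=\re(\overline{b}w)$, ranging over $[-|b|r,|b|r]$. The right-hand side then becomes the Möbius function $F(t)=\dfrac{|b|^2+r^2+2t}{1+|b|^2r^2+2t}$, whose derivative has the sign of $(1+|b|^2r^2)-(|b|^2+r^2)=(1-|b|^2)(1-r^2)$.

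The main point — indeed the only place where any genuine computation occurs — is this sign evaluation: since $|b|<1$ and $r<1$, the quantity $(1-|b|^2)(1-r^2)$ is strictly positive, so $F$ is strictly increasing and attains its maximum at $t=|b|r$, giving $F(|b|r)=\dfrac{(|b|+r)^2}{(1+|b|r)^2}$. Combining this with $|g(z)|\leqslant r=|z|$ yields
\[
|f(z)|=|\varphi_b(g(z))|\leqslant\frac{|b|+|z|}{1+|b||z|}=\frac{|f(0)|+|z|}{1+|f(0)||z|},
\]
as required. I do not expect a genuine obstacle here; once the reduction $g=\varphi_{-b}\circ f$ is in place, the argument is entirely elementary, and the monotonicity of $F$ packages the extremal estimate cleanly.
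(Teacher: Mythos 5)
Your proof is correct. Note that the paper itself gives no proof of this statement --- it is quoted from Krantz's book as a known result of Lindel\"of --- so there is nothing internal to compare against; your argument is the standard one, and it follows exactly the reduction $g=\varphi_{-b}\circ f$ that the paper sketches in its introduction as the device available in the holomorphic (but not the harmonic) case. The one step that genuinely needs care, namely that the crude triangle-inequality bound $|\varphi_b(w)|\leqslant\frac{|b|+|w|}{1-|b||w|}$ is too weak and one must instead show $\max_{|w|\leqslant r}|\varphi_b(w)|=\frac{|b|+r}{1+|b|r}$, you handle correctly via the monotonicity of the M\"obius function $F(t)$ in $t=\re(\overline{b}w)$.
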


\begin{theorem}[{\cite[Proposition 2.6.3 (p. 60)]{krantz}}, {\cite[Lemma 2]{osserman}}]\label{th:osserman}
Let $f:\mathbb{U}\rightarrow\mathbb{U}$ be a holomorphic function such that $f(0)=0$. Then
\begin{equation}\label{fla1:krantz}
    |f(z)|\leqslant|z|\frac{|f'(0)|+|z|}{1+|f'(0)||z|},\quad \mbox{for all} \quad z\in\mathbb{U}.
\end{equation}
\end{theorem}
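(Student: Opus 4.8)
The plan is to reduce the statement to the Schwarz--Pick type inequality already recorded in Theorem~\ref{th:lindelof}. Since $f$ is holomorphic on $\mathbb{U}$ with $f(0)=0$, the function
\[
g(z)=\begin{cases}f(z)/z,&z\in\mathbb{U}\setminus\{0\},\\ f'(0),&z=0,\end{cases}
\]
is holomorphic on $\mathbb{U}$ (the origin is a removable singularity). By the classical Schwarz lemma we have $|f(z)|\leqslant|z|$ for every $z\in\mathbb{U}$, and therefore $|g(z)|\leqslant1$; that is, $g$ maps $\mathbb{U}$ into $\overline{\mathbb{U}}$. Note also that $g(0)=f'(0)$.

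Before applying Theorem~\ref{th:lindelof} I would first dispose of the extremal case. If $|g(z_0)|=1$ for some $z_0\in\mathbb{U}$, then by the maximum modulus principle $g$ is a unimodular constant $c$, so $f(z)=cz$ with $|c|=1$. In this situation $|f'(0)|=1$, and both sides of the asserted inequality reduce to $|z|$, so it holds (with equality). Hence we may assume that $g$ actually maps $\mathbb{U}$ into the open disk $\mathbb{U}$, which is the hypothesis required by Theorem~\ref{th:lindelof}.

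Applying Theorem~\ref{th:lindelof} to the holomorphic self-map $g$ of $\mathbb{U}$ then gives
\[
|g(z)|\leqslant\frac{|g(0)|+|z|}{1+|g(0)|\,|z|}=\frac{|f'(0)|+|z|}{1+|f'(0)|\,|z|},\qquad z\in\mathbb{U}.
\]
Multiplying this inequality by $|z|$ and using the identity $|f(z)|=|z|\,|g(z)|$ yields precisely
\[
|f(z)|\leqslant|z|\,\frac{|f'(0)|+|z|}{1+|f'(0)|\,|z|},
\]
which is the desired bound.

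I do not expect a genuine obstacle in this argument. The only point requiring care is the transition from a map into the closed disk $\overline{\mathbb{U}}$ (which is all the Schwarz lemma guarantees for $g$) to a map into the open disk $\mathbb{U}$ (which Theorem~\ref{th:lindelof} formally demands); this is exactly what the short maximum-modulus discussion accomplishes, and it simultaneously settles the equality case.
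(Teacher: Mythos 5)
Your proof is correct. Note that the paper itself does not prove this theorem at all: it is quoted from the literature (Krantz, Proposition 2.6.3; Osserman, Lemma 2), and your argument --- factoring $f(z)=z\,g(z)$, using the classical Schwarz lemma to see $|g|\leqslant 1$, and then applying the Lindel\"of inequality of Theorem~\ref{th:lindelof} to $g$ --- is precisely the standard proof found in those sources. Your handling of the open-versus-closed disk issue via the maximum modulus principle is the right way to make the application of Theorem~\ref{th:lindelof} legitimate, and it correctly identifies the equality case $f(z)=cz$, $|c|=1$.
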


S. G. Krantz in his book \cite{krantz} attribute Theorem~\ref{th:lindelof}  to Lindel\"of. Note that Theorem~\ref{th:schwar1complex} could be considered as harmonic version of Theorem \ref{th:lindelof}. Similarly, one of the main result of this paper (Theorem \ref{th:main2}) could be considered as harmonic version of Theorem \ref{th:osserman}.

\subsection{Hyperbolic metric and Schwarz-Pick type estimates}\label{subsec:hyperbolic}
By $\Omega$ we denote a simply connected plane domain different from $\mathbb{C}$ (we call these domains hyperbolic). By Riemann's Mapping Theorem, it follows that any such domain are conformally equivalent to the unit disk $\mathbb{U}$. Also, a domain $\Omega$ is equipped with the hyperbolic metric $\rho_{\Omega}(z)|dz|$. More precisely, by definition we have
$$\displaystyle\rho_{\mathbb{U}}(z)=\frac{2}{1-|z|^2}$$
and if $f:\Omega\rightarrow\mathbb{U}$ a conformal isomorphism, then also by definition, we have $$\rho_{\Omega}(w)=\rho_{\mathbb{U}}(f(w))|f'(w)|.$$
The hyperbolic metric induces a hyperbolic distance on $\Omega$ in the following way
\begin{equation*}
    d_{\Omega}(z_1,z_2)=\inf\int_{\gamma}\rho_{\Omega}(z)|dz|,
\end{equation*}
where the infimum is taken over all $C^1$ curves $\gamma$ joining $z_1$ to $z_2$ in $\Omega$. For example, one can show that
\begin{equation*}
    d_{\mathbb{U}}(z_1,z_2)=2\artanh{\left|\frac{z_1-z_2}{1-z_1\overline{z_2}}\right|},
\end{equation*}
where $z_1,z_2\in\mathbb{U}$.


Hyperbolic metric and hyperbolic distance do not increase under a holomorphic function. More precisely, the following well-known theorem holds.

\begin{theorem}[The Schwarz-Pick lemma for simply connected domains, {\cite[Theorem~6.4.]{BeardonMinda}}]\label{th:schwpick}
Let $\Omega_1$ and $\Omega_2$ be hyperbolic domains and $f:\Omega_1\rightarrow\Omega_2$ be a holomorphic function. Then
\begin{equation}\label{schwpick:fla1}
    \rho_{\Omega_2}(f(z))|f'(z)|\leqslant\rho_{\Omega_1}(z), \quad \mbox{ for all } \quad z\in\Omega_1,
\end{equation}
and
\begin{equation}\label{schwpick:fla2}
    d_{\Omega_2}(f(z_1),f(z_2))\leqslant d_{\Omega_1}(z_1,z_2), \quad \mbox{ for all } \quad z_1,z_2\in\Omega_1.
\end{equation}
If $f$ is a conformal isomorphism from $\Omega_1$ onto $\Omega_2$ then in (\ref{schwpick:fla1}) and (\ref{schwpick:fla2}) equalities hold. On other hand if either equality holds in (\ref{schwpick:fla1}) at one
point $z$ or for a pair of distinct points in (\ref{schwpick:fla2}) then $f$ is a conformal isomorphism from $\Omega_1$ onto $\Omega_2$.
\end{theorem}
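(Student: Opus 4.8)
The plan is to reduce everything to the unit disk, where the statement is the classical Schwarz--Pick lemma, and then to transport it to arbitrary hyperbolic domains by means of the conformal isomorphisms that define $\rho_{\Omega_1}$ and $\rho_{\Omega_2}$. First I would record the disk version: if $g:\mathbb{U}\to\mathbb{U}$ is holomorphic, then $\rho_{\mathbb{U}}(g(w))|g'(w)|\leqslant\rho_{\mathbb{U}}(w)$ for every $w\in\mathbb{U}$. This I would prove in the standard way, by fixing $w_0\in\mathbb{U}$ and pre- and post-composing $g$ with automorphisms of $\mathbb{U}$ (maps of the form $\varphi_{\alpha}$ together with rotations) so that the resulting map $h:\mathbb{U}\to\mathbb{U}$ satisfies $h(0)=0$; the classical Schwarz lemma then gives $|h'(0)|\leqslant 1$, and unwinding the chain rule yields the infinitesimal inequality at $w_0$. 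The equality analysis also comes for free here: $|h'(0)|=1$ forces $h$ to be a rotation, i.e.\ $g$ to be a conformal automorphism of $\mathbb{U}$.

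Next I would transport this to $\Omega_1,\Omega_2$. Choose conformal isomorphisms $\sigma_i:\Omega_i\to\mathbb{U}$ and set $g=\sigma_2\circ f\circ\sigma_1^{-1}:\mathbb{U}\to\mathbb{U}$, which is holomorphic. Writing $w=\sigma_1(z)$ and applying the disk inequality to $g$, the chain rule together with the defining relations $\rho_{\Omega_2}(f(z))=\rho_{\mathbb{U}}(\sigma_2(f(z)))|\sigma_2'(f(z))|$ and $\rho_{\Omega_1}(z)=\rho_{\mathbb{U}}(\sigma_1(z))|\sigma_1'(z)|$ collapses $\rho_{\mathbb{U}}(g(w))|g'(w)|\leqslant\rho_{\mathbb{U}}(w)$ exactly into (\ref{schwpick:fla1}). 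The distance estimate (\ref{schwpick:fla2}) then follows by integration: given any $C^1$ curve $\gamma$ from $z_1$ to $z_2$ in $\Omega_1$, its image $f\circ\gamma$ joins $f(z_1)$ to $f(z_2)$ in $\Omega_2$, and (\ref{schwpick:fla1}) gives $\int_{f\circ\gamma}\rho_{\Omega_2}(\zeta)\,|d\zeta|=\int_{\gamma}\rho_{\Omega_2}(f(z))|f'(z)|\,|dz|\leqslant\int_{\gamma}\rho_{\Omega_1}(z)\,|dz|$; taking the infimum over all admissible $\gamma$ yields the claim.

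For the equality statements I would again pass through $g$. If $f$ is a conformal isomorphism then so is $g$, hence $g$ is an automorphism of $\mathbb{U}$, for which the disk inequality is an equality at every point; transporting back gives equality in both (\ref{schwpick:fla1}) and (\ref{schwpick:fla2}). Conversely, equality in (\ref{schwpick:fla1}) at a single $z$, or in (\ref{schwpick:fla2}) for a pair of distinct points, forces the corresponding equality for $g$ on $\mathbb{U}$; by the equality case of the classical Schwarz lemma this makes $g$ an automorphism of $\mathbb{U}$, whence $f=\sigma_2^{-1}\circ g\circ\sigma_1$ is a conformal isomorphism. The point requiring the most care is the well-definedness underlying the whole reduction, namely that $\rho_{\Omega}$ does not depend on the chosen isomorphism $\sigma$; this amounts precisely to the fact that automorphisms of $\mathbb{U}$ are hyperbolic isometries, which is itself the equality case of the disk Schwarz--Pick lemma. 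Thus the infinitesimal inequality on $\mathbb{U}$ and its equality analysis are really the heart of the argument, with everything else being bookkeeping through the chain rule and integration.
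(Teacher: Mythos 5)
The paper does not prove this theorem at all: it is quoted as a known background result from Beardon--Minda (their Theorem~6.4), so there is no internal proof to compare against. Your argument is the standard one and it is correct: reduction to the disk via Riemann maps $\sigma_i$, the classical Schwarz--Pick lemma on $\mathbb{U}$ (proved from the Schwarz lemma by composing with automorphisms $\varphi_\alpha$ and rotations), the chain-rule transport of the infinitesimal inequality, integration along curves plus the observation that image curves are admissible competitors to get the distance inequality, and transport of the equality analysis back and forth through $g=\sigma_2\circ f\circ\sigma_1^{-1}$. You also correctly flag the one point that genuinely needs care, namely that $\rho_\Omega$ is well defined independently of the choice of Riemann map, which is the isometry property of disk automorphisms. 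One remark worth making explicit: your reduction uses Riemann maps and hence the paper's convention that ``hyperbolic'' means simply connected and different from $\mathbb{C}$; for general hyperbolic domains (complement containing at least two points), the same statement holds but the proof must route through holomorphic universal covering maps $\pi_i:\mathbb{U}\to\Omega_i$ and lifting of $f$, and the equality case then concludes only that the lift is an automorphism, from which one argues that $f$ is a covering (an isomorphism in the simply connected case). Within the paper's definition your proof is complete.
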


%
%
%

\
For holomorphic function $f:\Omega_1\rightarrow\Omega_2$ (where $\Omega_1$ and $\Omega_2$ are hyperbolic domains) it's defined (for motivation and details see Section 5 in \cite{BeardonMinda}, cf. \cite{BeardonCarne}) the \emph{hyperbolic distortion} of $f$ at $z\in\Omega_1$ on the following way $$\displaystyle|f^h(z)|=\frac{\rho_{\Omega_2}(f(z))}{\rho_{\Omega_1}(z)}|f'(z)|.$$

Note that by Theorem \ref{th:schwpick} we also have $|f^h(z)|\leqslant 1$ for all $z\in\Omega_1$.

Using this notion, in 1992, A. F. Beardon and T. K. Carne proved the following theorem which is stronger than Theorem \ref{th:schwpick}.

\begin{theorem}[\cite{BeardonCarne}]\label{th:BeardonCarne}
Let $\Omega_1$ and $\Omega_2$ be hyperbolic domains and $f:\Omega_1\rightarrow\Omega_2$ be a holomorphic function. Then for all $z,w\in\Omega_1$,
\begin{equation*}
    d_{\Omega_2}(f(z),f(w))\leqslant\log(\cosh{d_{\Omega_1}(z,w)}+|f^h(w)|\sinh{d_{\Omega_1}(z,w)}).
\end{equation*}
\end{theorem}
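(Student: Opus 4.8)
The plan is to use the conformal invariance of every quantity in the statement to reduce to a single normalized self-map of $\mathbb{U}$, and then to invoke the refined Schwarz estimate of Theorem \ref{th:osserman}. First I would pass from $\Omega_1,\Omega_2$ to the disk. By Riemann's Mapping Theorem pick conformal isomorphisms $\sigma_1:\mathbb{U}\to\Omega_1$ and $\sigma_2:\Omega_2\to\mathbb{U}$ and set $g=\sigma_2\circ f\circ\sigma_1:\mathbb{U}\to\mathbb{U}$. By Theorem \ref{th:schwpick} conformal isomorphisms are hyperbolic isometries, so writing $\zeta=\sigma_1^{-1}(z)$ and $\omega=\sigma_1^{-1}(w)$ one gets $d_{\Omega_1}(z,w)=d_{\mathbb{U}}(\zeta,\omega)$ and $d_{\Omega_2}(f(z),f(w))=d_{\mathbb{U}}(g(\zeta),g(\omega))$. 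Moreover, since the hyperbolic distortion is multiplicative under composition and equals $1$ for conformal isomorphisms, $|g^h(\zeta)|=|f^h(\sigma_1(\zeta))|$; in particular $|g^h(\omega)|=|f^h(w)|$. Hence it suffices to treat a holomorphic $g:\mathbb{U}\to\mathbb{U}$.

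Next I would normalize the two base points to the origin. Composing on the right with $\varphi_{\omega}$ and on the left with $\varphi_{-g(\omega)}$ (the M\"obius automorphisms from the excerpt), set $G=\varphi_{-g(\omega)}\circ g\circ\varphi_{\omega}$, so that $G(0)=0$. Because $\varphi_{\omega}$ and $\varphi_{-g(\omega)}$ are isometries, and because $\rho_{\mathbb{U}}(0)=\rho_{\mathbb{U}}(G(0))=2$ forces $|G^h(0)|=|G'(0)|$, one obtains $a:=|G'(0)|=|g^h(\omega)|=|f^h(w)|$. Tracking the point $z$ through these maps and using that automorphisms preserve $d_{\mathbb{U}}$, the claimed inequality becomes exactly
\begin{equation*}
d_{\mathbb{U}}(G(\xi),0)\leqslant\log\bigl(\cosh s+a\sinh s\bigr),\qquad s=d_{\mathbb{U}}(\xi,0),
\end{equation*}
for the image $\xi$ of $z$.

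Now the analytic input enters. After an inessential rotation making $G'(0)\geqslant 0$, Theorem \ref{th:osserman} applied to $G$ gives, with $t=|\xi|$,
\begin{equation*}
|G(\xi)|\leqslant t\,\frac{a+t}{1+at}.
\end{equation*}
Since $d_{\mathbb{U}}(\cdot,0)=\log\frac{1+|\cdot|}{1-|\cdot|}$ is increasing in the modulus, substituting this bound and clearing the factor $1+at$ yields $d_{\mathbb{U}}(G(\xi),0)\leqslant\log\frac{1+2at+t^2}{1-t^2}$. Finally, with $t=\tanh(s/2)$ together with the identities $\cosh s=\frac{1+t^2}{1-t^2}$ and $\sinh s=\frac{2t}{1-t^2}$, the right-hand side is precisely $\log(\cosh s+a\sinh s)$, which closes the argument.

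The step I expect to be the main obstacle is the reduction itself: one must verify carefully that $|f^h|$ is a genuine conformal invariant (this is exactly why its definition carries the factor $\rho_{\Omega_2}/\rho_{\Omega_1}$ rather than $|f'|$ alone) and that the normalizing automorphisms leave the constant $a$ unchanged. One should also record the degenerate case: if $G$ is constant then $a=0$, the left-hand side is $0$, and the right-hand side equals $\log\cosh s\geqslant 0$, so the inequality holds trivially. Once the normalization is secured, the remaining content is just Theorem \ref{th:osserman} and an elementary hyperbolic identity.
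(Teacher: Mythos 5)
Your proof is correct, but there is nothing in the paper to compare it against: the paper does not prove Theorem \ref{th:BeardonCarne} at all, it quotes the result from \cite{BeardonCarne} and uses it as a black box in the proof of Theorem \ref{th:main1}. What you have written is essentially a reconstruction of the original Beardon--Carne argument, and all of its ingredients check out. The reduction step is sound: the chain rule together with the definition $\rho_{\Omega}(w)=\rho_{\mathbb{U}}(\sigma(w))|\sigma'(w)|$ shows that $|g^h(\zeta)|=|f^h(\sigma_1(\zeta))|$, i.e.\ the hyperbolic distortion is a conformal invariant (as you note, this is precisely why it carries the density ratio), and $\rho_{\mathbb{U}}(0)=\rho_{\mathbb{U}}(G(0))=2$ gives $|G^h(0)|=|G'(0)|=|f^h(w)|$. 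The analytic core is Theorem \ref{th:osserman}, and the monotonicity of $r\mapsto\log\frac{1+r}{1-r}$ justifies substituting the bound $|G(\xi)|\leqslant t\frac{a+t}{1+at}$, giving $\log\frac{1+2at+t^2}{1-t^2}$; with $t=\tanh(s/2)$, $\cosh s=\frac{1+t^2}{1-t^2}$ and $\sinh s=\frac{2t}{1-t^2}$ this is exactly $\log(\cosh s+a\sinh s)$. Incidentally, this last identity is the same one the paper uses (in the reverse direction) in the proof of Theorem \ref{th:main1}, where $\log\left(\cosh d_{\mathbb{U}}(z,0)+c\sinh d_{\mathbb{U}}(z,0)\right)$ is evaluated as $\log\frac{1+|z|^2+2c|z|}{1-|z|^2}$, so your computation is consistent with the paper's usage. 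Two minor remarks: the rotation making $G'(0)\geqslant 0$ is unnecessary, since Theorem \ref{th:osserman} is stated with $|f'(0)|$; and your handling of the degenerate constant case is fine but also already covered by the general argument.
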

Let us note that Theorem \ref{th:BeardonCarne} is of crucial importance for our research (see proof of Theorem \ref{th:main1}).

There are many papers where authors have considered various versions of Schwarz-Pick type estimates for harmonic functions (see \cite{khavinson, burgeth, MKandMM, colonna, kavu, hhChen, MMar}). In this regard, we note that M. Mateljevi\'c \cite{MMSchw_Kob} recently explained one method (refer to it as the strip method) which enabled that some of these results to be proven in an elegant way.


For completeness we will shortly reproduce the strip method. In order to do so, we will first introduce the appropriate notation and specify some simple facts.

By $\mathbb{S}$ we denote the strip $\{z\in\mathbb{C}:-1<\re{z}<1\}$. The mapping $\varphi$ defined by $\displaystyle\varphi(z)=\tan{\left(\frac{\pi}{4}z\right)}$ is a conformal isomorphism from $\mathbb{S}$ onto $\mathbb{U}$ and by $\phi$ we denote the inverse mapping of $\varphi$ (see also Example 1 in \cite{MMandMS}). Throughout this paper by $\varphi$ and $\phi$ we always denote these mappings.

Using the mapping $\varphi$ one can derive the following equality
$$
\rho_{\mathbb{S}}(z)=\rho_{\mathbb{U}}(\varphi(z))|\varphi'(z)|=\frac{\pi}{2}\frac{1}{\cos{\left(\displaystyle\frac{\pi}{2}\re{z}\right)}}, \quad \mbox{ for all } \quad z\in\mathbb{S}.
$$

By $\nabla u$ we denote the gradient of real-valued $C^1$ function $u$, i.e. $\nabla u=(u_x,u_y)=u_x+iu_y$. If $f=u+iv$ is complex-valued $C^1$ function, where $u=\re{f}$ and $v=\im{f}$, then we use notation
$$f_{x}=u_{x}+iv_{x} \qquad \mbox{ and } \qquad f_{y}=u_{y}+iv_{y},$$
as well as
$$f_{z}=\displaystyle\frac{1}{2}(f_{x}-if_{y}) \qquad \mbox{ and } \qquad f_{\overline{z}}=\displaystyle\frac{1}{2}(f_{x}+if_{y}).$$
Finally, by $df(z)$ we denote differential of the function $f$ at point $z$, i.e. the Jacobian matrix
\begin{equation*}
\left(
  \begin{array}{cc}
    u_x(z) & u_y(z) \\
    v_x(z) & v_y(z) \\
  \end{array}
\right).
\end{equation*}

The matrix $df(z)$ is an $\mathbb{R}$-linear operator from the tangent $T_{z}\mathbb{R}^2$ to the tangent space $T_{f(z)}\mathbb{R}^2$. By $\|df(z)\|$ we denote norm of this operator. It is not difficult to prove that $\|df(z)\|=|f_z(z)|+|f_{\overline{z}}(z)|$.

Briefly, the strip method consists of the following elementary considerations (see \cite{MMandMS}):

\begin{itemize}
\item[(I)] Suppose that $f:\mathbb{U}\rightarrow\mathbb{S}$ be a holomorphic function. Then by Theorem \ref{th:schwpick} we have $\rho_{\mathbb{S}}(f(z))|f'(z)|\leqslant\rho_{\mathbb{U}}(z)$, for all $z\in \mathbb{U}$.
\item[(II)] If $f=u+iv$ is a harmonic function and $F=U+iV$ is a holomorphic function on a domain  $D$   such that $\re{f}=\re{F}$  on  $D$ (in this setting we say that $F$ is associated to $f$ or to $u$), then $F'=U_x+iV_x = U_x-iU_y = u_x-iu_y$. Hence $F'=\overline{\nabla u}$ and $|F'|=|\overline{\nabla u}|=|\nabla u|$.
\item[(III)] Suppose that $D$ is a simply connected plane domain and $f:D \rightarrow \mathbb{S}$ is a harmonic function. Then it is known from the standard course of complex analysis that there is a holomorphic function $F$ on $D$  such that  $\re{f} = \re{F}$ on $D$, and it is clear that $F:D\rightarrow\mathbb{S}$.
\item[(IV)] The hyperbolic density $\rho_{\mathbb{S}}$ at point $z$  depends only on  $\re{z}$.
\end{itemize}
By  (I)-(IV)  it is readable  that we have the following theorem.
\begin{theorem}[{\cite[Proposition 2.4]{MMSchw_Kob}}, \cite{kavu,hhChen}]\label{th:schwpickhar}
Let $u:\mathbb{U}\rightarrow(-1,1)$ be a harmonic function and let $F$ be a holomorphic function which  is associated to $u$. Then
\begin{equation}\label{schwpickhar:fla1}
    \rho_{\mathbb{S}}(u(z))|\nabla u(z)|=\rho_{\mathbb{S}}(F(z))|F'(z)|\leqslant \rho_{\mathbb{U}}(z),\quad \mbox{ for all }\quad z\in\mathbb{U}.
\end{equation}
In other words
\begin{equation}\label{schwpickhar:fla2}
|\nabla u(z)|\leqslant\frac{4}{\pi}\frac{\displaystyle\cos{\left(\frac{\pi}{2}u(z)\right)}}{1-|z|^2}, \quad \mbox{ for all }\quad z\in\mathbb{U}.
\end{equation}
If $u$ is real part of a conformal isomorphism from $\mathbb{U}$ onto $\mathbb{S}$ then in (\ref{schwpickhar:fla2}) equality holds for all $z\in\mathbb{U}$ and vice versa.
\end{theorem}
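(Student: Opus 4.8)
The plan is to reduce the harmonic statement to the holomorphic Schwarz--Pick lemma (Theorem~\ref{th:schwpick}) by passing to an associated holomorphic function, exactly as outlined in the four bullet points (I)--(IV) preceding the statement. Concretely, given a harmonic $u:\mathbb{U}\rightarrow(-1,1)$, I would first invoke the standard fact from complex analysis (item (III)) that on the simply connected domain $\mathbb{U}$ there exists a holomorphic function $F=U+iV$ with $\re F=u$; since $u$ takes values in $(-1,1)$, we have $F:\mathbb{U}\rightarrow\mathbb{S}$. This $F$ is the holomorphic function associated to $u$ named in the statement.

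Next I would establish the first equality in (\ref{schwpickhar:fla1}). By item (IV), the hyperbolic density $\rho_{\mathbb{S}}$ depends only on the real part of its argument, so $\rho_{\mathbb{S}}(F(z))=\rho_{\mathbb{S}}(u(z))$ because $\re F(z)=u(z)$. For the norm of the derivative, I would use item (II): from $\re F=u$ and the Cauchy--Riemann equations $U_x=V_y$, $U_y=-V_x$, one gets $F'=U_x+iV_x=u_x-iu_y=\overline{\nabla u}$, whence $|F'(z)|=|\nabla u(z)|$. Combining these two identities yields $\rho_{\mathbb{S}}(u(z))|\nabla u(z)|=\rho_{\mathbb{S}}(F(z))|F'(z)|$.

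The central inequality then follows immediately by applying Theorem~\ref{th:schwpick} (the Schwarz--Pick lemma) to the holomorphic map $F:\mathbb{U}\rightarrow\mathbb{S}$ with $\Omega_1=\mathbb{U}$ and $\Omega_2=\mathbb{S}$, which gives $\rho_{\mathbb{S}}(F(z))|F'(z)|\leqslant\rho_{\mathbb{U}}(z)$ for all $z\in\mathbb{U}$; this is precisely (\ref{schwpickhar:fla1}). To obtain the explicit form (\ref{schwpickhar:fla2}), I would substitute the computed densities $\rho_{\mathbb{S}}(u(z))=\dfrac{\pi}{2}\dfrac{1}{\cos\left(\frac{\pi}{2}u(z)\right)}$ (valid since $u(z)\in(-1,1)$, so $\re u(z)=u(z)$) and $\rho_{\mathbb{U}}(z)=\dfrac{2}{1-|z|^2}$ into the inequality and solve for $|\nabla u(z)|$, producing the stated bound with the factor $\frac{4}{\pi}$.

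For the equality case, I would appeal to the sharp part of Theorem~\ref{th:schwpick}: equality in (\ref{schwpickhar:fla2}) at some point is equivalent to equality in (\ref{schwpick:fla1}) for $F$ at that point, which forces $F$ to be a conformal isomorphism of $\mathbb{U}$ onto $\mathbb{S}$; taking real parts gives that $u$ is the real part of such an isomorphism, and the converse is immediate since a conformal isomorphism attains equality everywhere. I expect no serious obstacle here: the only subtle point is the well-definedness of $F$, which rests on simple connectivity of $\mathbb{U}$, and the verification that the strip density indeed depends only on $\re z$, which is a direct computation already recorded in the excerpt. The main work is thus bookkeeping rather than a genuine difficulty.
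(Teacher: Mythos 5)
Your proposal is correct and follows essentially the same route as the paper: the paper derives this theorem directly from the strip-method observations (I)--(IV) together with the Schwarz--Pick lemma (Theorem~\ref{th:schwpick}), which is exactly the argument you spell out, including the identities $F'=\overline{\nabla u}$ and $\rho_{\mathbb{S}}(F(z))=\rho_{\mathbb{S}}(u(z))$ and the equality case via the sharpness clause of Schwarz--Pick. Your write-up merely makes explicit the details the paper leaves as ``readable.''
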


In 1989, F. Colonna \cite{colonna} proved the following version of the Schwarz-Pick lemma
for harmonic functions.

\begin{theorem}[{\cite[Theorem 3]{colonna}} and {\cite[Proposition 2.8]{MMSchw_Kob}}, cf. {\cite[Theorem 6.26]{axler}}]\label{th:colonna}
Let $f:\mathbb{U}\rightarrow\mathbb{U}$ be a harmonic function. Then
\begin{equation}\label{colonna:fla1}
    \|df(z)\|\leqslant\frac{4}{\pi}\frac{1}{1-|z|^2}, \quad \mbox{ for all } z\in\mathbb{U}.
\end{equation}
In particular,
\begin{equation}\label{colonna:fla2}
    \|df(0)\|\leqslant\frac{4}{\pi}.
\end{equation}
\end{theorem}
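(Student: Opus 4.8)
The plan is to reduce the estimate on the full differential to the scalar Schwarz--Pick estimate of Theorem~\ref{th:schwpickhar} by projecting $f$ onto every real direction. First I would record the identity $\|df(z)\|=|f_z(z)|+|f_{\overline{z}}(z)|$ already noted in the excerpt, and fix the canonical decomposition $f=h+\overline{g}$, where $h$ and $g$ are holomorphic on $\mathbb{U}$; then $f_z=h'$ and $f_{\overline{z}}=\overline{g'}$, so that
\begin{equation*}
\|df(z)\|=|h'(z)|+|g'(z)|.
\end{equation*}

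For each unimodular $\lambda$ (that is, $|\lambda|=1$) I would consider the real harmonic function $u_\lambda=\re(\overline{\lambda}f)$. Because $|f(z)|<1$ and $|\lambda|=1$, we have $|u_\lambda(z)|\leqslant|f(z)|<1$, so $u_\lambda:\mathbb{U}\rightarrow(-1,1)$ is an admissible function for Theorem~\ref{th:schwpickhar}. A short computation with the decomposition $f=h+\overline{g}$ shows that the holomorphic function associated to $u_\lambda$ is $F_\lambda=\overline{\lambda}h+\lambda g$, so that
\begin{equation*}
|\nabla u_\lambda(z)|=|F_\lambda'(z)|=|\overline{\lambda}h'(z)+\lambda g'(z)|=|h'(z)+\lambda^2 g'(z)|.
\end{equation*}

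The key step is to observe that, as $\lambda$ runs over the unit circle, $\lambda^2$ also runs over the unit circle, so one may choose the phase of $\lambda^2$ to align $h'(z)$ and $\lambda^2 g'(z)$; this yields
\begin{equation*}
\sup_{|\lambda|=1}|\nabla u_\lambda(z)|=|h'(z)|+|g'(z)|=\|df(z)\|.
\end{equation*}
Applying Theorem~\ref{th:schwpickhar} to each $u_\lambda$ and discarding the cosine factor (which is at most $1$) gives $|\nabla u_\lambda(z)|\leqslant\frac{4}{\pi}\frac{1}{1-|z|^2}$ uniformly in $\lambda$; taking the supremum over $\lambda$ then produces (\ref{colonna:fla1}), and setting $z=0$ gives (\ref{colonna:fla2}). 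The only genuine obstacle is the passage from the directional (scalar) estimates back to the operator norm of $df$; I expect the identification $\sup_{|\lambda|=1}|\nabla u_\lambda(z)|=\|df(z)\|$ via the holomorphic/antiholomorphic decomposition to be the crux, while everything else is a direct invocation of Theorem~\ref{th:schwpickhar}.
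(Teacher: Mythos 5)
Your proof is correct. Every step checks out: the decomposition $f=h+\overline{g}$ with $h,g$ holomorphic exists since $\mathbb{U}$ is simply connected, giving $\|df(z)\|=|f_z(z)|+|f_{\overline{z}}(z)|=|h'(z)|+|g'(z)|$; the projection $u_\lambda=\re(\overline{\lambda}f)$ does map into $(-1,1)$ and has associated holomorphic function $F_\lambda=\overline{\lambda}h+\lambda g$; the phase-alignment argument correctly yields $\sup_{|\lambda|=1}|\nabla u_\lambda(z)|=|h'(z)|+|g'(z)|$; and Theorem~\ref{th:schwpickhar} with the cosine factor discarded gives the uniform bound. Note, however, that the paper does not prove Theorem~\ref{th:colonna} at all --- it is quoted from Colonna and Mateljevi\'c --- so the natural comparison is with the paper's proof of Theorem~\ref{th:main2}, which uses exactly your strategy of projecting onto real directions and invoking a scalar estimate. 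The difference lies in how the crux identity is established: the paper writes the projections as $U=\cos\theta\,u+\sin\theta\,v$ (your $u_\lambda$ with $\lambda=e^{i\theta}$) and obtains $\max_{\theta}|\nabla U(z)|=|f_z(z)|+|f_{\overline{z}}(z)|=\|df(z)\|$ by the trigonometric identity of Lemma~\ref{lem:colonna}, a direct computation on the gradient components, whereas you obtain the same identity structurally from the canonical decomposition $f=h+\overline{g}$ and the surjectivity of $\lambda\mapsto\lambda^2$ on the circle. The two computations are equivalent (the quantity $\tfrac{1}{2}(|w+iz|+|w-iz|)$ in Lemma~\ref{lem:colonna} is precisely $|f_z|+|f_{\overline{z}}|$), but your route avoids the auxiliary lemma entirely and makes the extremal direction visible, while the paper's route avoids invoking the holomorphic/antiholomorphic decomposition; also, for this theorem the coarse estimate of Theorem~\ref{th:schwpickhar} suffices, as you use, whereas the proof of Theorem~\ref{th:main2} needs the sharper Theorem~\ref{th:main1} in the same role.
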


\begin{remark} The inequality (\ref{colonna:fla1}) is sharp in the following sense: for all $z\in\mathbb{U}$ there exists a harmonic function $f^z:\mathbb{U}\rightarrow\mathbb{U}$ (which depends on $z$) such that
\begin{equation*}
     \|df^z(z)\|=\frac{4}{\pi}\frac{1}{1-|z|^2}.
\end{equation*}
One such function is defined by $f(\zeta)=\re{(\phi(\varphi_{-z}(\zeta)))}$. For more details see Theorem 4 in \cite{colonna}.
\end{remark}

\begin{remark}
The inequality (\ref{colonna:fla2}) could not be improved even if we add the assumption that $f(0)=0$. More precisely, if $f(\zeta)=\re\phi(\zeta)$ then $f$ satisfy all assumptions of Theorem \ref{th:colonna}, $f(0)=0$ and $\displaystyle\|df(0)\|=\frac{4}{\pi}$
(see also {\cite[Proposition 2.8]{MMSchw_Kob}} and {\cite[Theorem 6.26]{axler}}).\end{remark}

\begin{remark}\label{rm:problem}
It seems that question: ,,Is it possible to improve the inequality (\ref{colonna:fla1}), if we add the assumption $f(0)=b$, where $b\neq0$?'' is an open problem (see {\cite[Problem 2]{MMSchw_Kob}}).
\end{remark}


Note that the inequalities (\ref{schwpickhar:fla2}) and (\ref{colonna:fla2}) naturally impose assumptions in the Theorems \ref{th:main1} and \ref{th:main2} below.

\section{Main results}

\begin{theorem}\label{th:main1}
Let $u:\mathbb{U}\rightarrow(-1,1)$ be a harmonic function such that:
\begin{itemize}
  \item[(R1)] $u(0)=b$ and
  \item[(R2)] $|\nabla u(0)|=d$, where $\displaystyle d\leqslant\frac{4}{\pi}\cos{\left(\frac{\pi}{2}b\right)}$.
\end{itemize}
Then, for all $z\in\mathbb{U}$,
\begin{equation}\label{ineq:main}
\frac{4}{\pi}\arctan{\varphi_{a}\big(-|z|\varphi_{c}(|z|)\big)}\leqslant u(z)\leqslant\frac{4}{\pi}\arctan{\varphi_{a}\big(|z|\varphi_{c}(|z|)\big)}.
\end{equation}
Here $\displaystyle a=\tan\frac{b\pi}{4}$ and $c=\displaystyle\frac{\pi}{4}\frac{1}{\cos{\displaystyle\frac{\pi}{2}b}}d$.
These inequalities are sharp for each point $z\in\mathbb{U}$ in the following sense: for arbitrary $z\in\mathbb{U}$ there exist harmonic functions $\widehat{u}^z, \widetilde{u}^z:\mathbb{U}\rightarrow(-1,1)$, which depend on $z$, such that they satisfy {\rm(R1)} and {\rm(R2)} and also
\begin{equation*}
    \widehat{u}^z(z)=\frac{4}{\pi}\arctan{\varphi_{a}\big(-|z|\varphi_{c}(|z|)\big)}\quad \mbox{and} \quad \widetilde{u}^z(z)=\frac{4}{\pi}\arctan{\varphi_{a}\big(|z|\varphi_{c}(|z|)\big)}.
\end{equation*}
\end{theorem}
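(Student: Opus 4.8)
The plan is to pass to the strip $\mathbb{S}$ via the associated holomorphic function, apply the Beardon--Carne estimate (Theorem \ref{th:BeardonCarne}), and then read off the real part using a well-chosen $1$-Lipschitz functional.

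\emph{Setup and distortion.} Since $u:\mathbb{U}\to(-1,1)$ is harmonic and $\mathbb{U}$ is simply connected, by (II)--(III) of the strip method there is a holomorphic $F:\mathbb{U}\to\mathbb{S}$ with $\re F=u$; thus $\re F(0)=b$ and, by (II), $|F'(0)|=|\nabla u(0)|=d$. First I would record the hyperbolic distortion of $F$ at the origin. Using $\rho_{\mathbb{U}}(0)=2$ and (by (IV)) $\rho_{\mathbb{S}}(F(0))=\frac{\pi}{2}\frac{1}{\cos(\frac{\pi}{2}b)}$, one gets $|F^h(0)|=\frac{\rho_{\mathbb{S}}(F(0))}{\rho_{\mathbb{U}}(0)}|F'(0)|=\frac{\pi}{4}\frac{1}{\cos(\frac{\pi}{2}b)}d=c$, and (R2) is precisely the statement $c\leqslant 1$ (so that $\varphi_c$ is well defined). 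Applying Theorem \ref{th:BeardonCarne} with $\Omega_1=\mathbb{U}$, $\Omega_2=\mathbb{S}$ and base point $0$, and using $d_{\mathbb{U}}(z,0)=2\artanh|z|$, I obtain $d_{\mathbb{S}}(F(z),F(0))\leqslant R$, where $R=\log\!\big(\cosh(2\artanh|z|)+c\sinh(2\artanh|z|)\big)$. A short computation with $\cosh(2\artanh t)=\frac{1+t^2}{1-t^2}$ and $\sinh(2\artanh t)=\frac{2t}{1-t^2}$ gives $e^{R}=\frac{1+2c|z|+|z|^2}{1-|z|^2}$, and hence the clean identity $\tanh\frac{R}{2}=\frac{e^{R}-1}{e^{R}+1}=|z|\varphi_c(|z|)$.

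\emph{The Lipschitz functional.} The key idea is to introduce $\Sigma:\mathbb{S}\to\mathbb{R}$, $\Sigma(w)=s(\re w)$, where $s(t):=d_{\mathbb{S}}(0,t)=2\artanh\varphi(t)$ is the hyperbolic length measured along the real geodesic $(-1,1)\subset\mathbb{S}$. Because $\rho_{\mathbb{S}}(w)$ depends only on $\re w$ (fact (IV)), the Euclidean gradient of $\Sigma$ satisfies $|\nabla\Sigma|=s'(\re w)=\rho_{\mathbb{S}}(\re w)=\rho_{\mathbb{S}}(w)$, so the hyperbolic gradient norm of $\Sigma$ is identically $1$; integrating along curves shows $\Sigma$ is $1$-Lipschitz for $d_{\mathbb{S}}$. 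Applying this to $F(z)$ and $F(0)$ and using $\re F(0)=b$, $\re F(z)=u(z)$ yields $|s(u(z))-s(b)|\leqslant d_{\mathbb{S}}(F(z),F(0))\leqslant R$, whence, since $s$ is increasing, $s^{-1}(s(b)-R)\leqslant u(z)\leqslant s^{-1}(s(b)+R)$. To invert, I solve $s(t)=s(b)\pm R$: writing $s(t)=2\artanh\varphi(t)$ and using $\varphi(b)=a$, this becomes $\artanh\varphi(t)-\artanh a=\pm\frac{R}{2}$, so by the addition formula for $\artanh$ I get $\varphi_{-a}(\varphi(t))=\pm\tanh\frac{R}{2}=\pm|z|\varphi_c(|z|)$, i.e.\ $\varphi(t)=\varphi_a(\pm|z|\varphi_c(|z|))$ and $t=\frac{4}{\pi}\arctan\varphi_a(\pm|z|\varphi_c(|z|))$. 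This is exactly (\ref{ineq:main}).

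\emph{Sharpness.} Fix $z$ and write $r=|z|$. I would realize the bounds by reducing to the extremal case of Osserman's lemma (Theorem \ref{th:osserman}, itself resting on Theorem \ref{th:lindelof}): there is a holomorphic $h:\mathbb{U}\to\mathbb{U}$ with $h(0)=0$, $|h'(0)|=c$ and $h(z)=r\varphi_c(r)$ for the upper bound (respectively $h(z)=-r\varphi_c(r)$ for the lower one). Then set $B=\varphi_a\circ h:\mathbb{U}\to\mathbb{U}$, $F=\phi\circ B:\mathbb{U}\to\mathbb{S}$, and $\widetilde{u}^z=\re F$. One checks $\widetilde{u}^z(0)=\re\phi(a)=b$ and, using $\cos\frac{\pi}{2}b=\frac{1-a^2}{1+a^2}$, that $|\nabla\widetilde{u}^z(0)|=|F'(0)|=|\phi'(a)|(1-a^2)c=\frac{4}{\pi}\frac{1-a^2}{1+a^2}c=d$, so (R1) and (R2) hold; moreover $\widetilde{u}^z(z)=\frac{4}{\pi}\arctan\varphi_a(r\varphi_c(r))$, the upper bound, since the argument is real. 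The function $\widehat{u}^z$ is obtained identically from the $-r\varphi_c(r)$ choice.

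I expect the one genuinely non-routine step to be the recognition of the $1$-Lipschitz functional $\Sigma$ (equivalently, the fact that the maximal real part over a hyperbolic ball in $\mathbb{S}$ centred on the real axis is attained along the real geodesic); once this is in place, the passage from $d_{\mathbb{S}}(F(z),F(0))\leqslant R$ to (\ref{ineq:main}) is pure calculus together with the $\artanh$ addition formula. The remaining care is in the sharpness part, namely producing the Osserman extremal $h$ with the prescribed value \emph{and phase} at the specific point $z$, which only requires rotating the standard extremizer so that $h(z)$ is the desired real number.
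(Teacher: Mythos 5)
Your proof is correct, and its skeleton coincides with the paper's: pass to the associated holomorphic $F:\mathbb{U}\to\mathbb{S}$ via the strip method, compute $|F^h(0)|=c$, apply Beardon--Carne at the base point $0$, and identify $\tanh(R/2)=|z|\varphi_c(|z|)$ (the paper phrases this as finding $R(z)$ with $d_{\mathbb{U}}(R(z),0)=R$, which gives the same quantity). Where you genuinely depart from the paper is the step from $d_{\mathbb{S}}(F(z),b)\leqslant R$ to the bound on $\re F(z)$: the paper quotes Lemma \ref{lem:hypdisk0} (from \cite{MMandMS}), which rests on the geometric facts that hyperbolic disks in $\mathbb{S}$ centred on the real axis are symmetric about that axis and Euclidean convex, and which gives the \emph{exact} projection interval of the hyperbolic disk $\overline{S}_{\lambda}(b)$ onto the real axis; you instead introduce the functional $\Sigma(w)=s(\re w)$ and prove it is $1$-Lipschitz for $d_{\mathbb{S}}$ using fact (IV), then invert $s$ by the $\artanh$ addition formula. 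Your route is self-contained (it avoids the convexity/symmetry input and the external lemma entirely) and has the small additional advantage that it never requires normalizing $\im F(0)=0$, since $\Sigma$ only sees real parts; the paper's route, in exchange, records the precise shadow of the hyperbolic disk, which is the geometric content behind the sharpness statement. Your sharpness construction is in substance identical to the paper's: your $F=\phi\circ\varphi_a\circ h$ with $h$ the rotated extremizer $\zeta\mapsto(e^{-i\arg z}\zeta)\,\varphi_c(e^{-i\arg z}\zeta)$ is exactly the paper's $\widetilde{\Phi}(e^{-i\arg z}\zeta)$, and the verification of (R1), (R2) and the attained value matches. Two cosmetic points: your $s$ should be defined as the \emph{signed} displacement $s(t)=2\artanh\varphi(t)$ (equal to $d_{\mathbb{S}}(0,t)$ only for $t\geqslant0$), since monotonicity and the inversion step need the signed version, which is in fact what you compute with; and the boundary case $c=1$ allowed by (R2) should be dispatched by the paper's convention $\varphi_1(|z|)\equiv1$, under which your computation still goes through (there $R=d_{\mathbb{U}}(z,0)$ and Beardon--Carne degenerates to Schwarz--Pick).
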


\begin{remark}
Formally, if $c=1$ then function $\varphi_c$ is not defined. In this case we mean that $\varphi_{c}(|z|)=1$ for all $z\in\mathbb{U}$.
\end{remark}

\begin{corollary}
Let $u:\mathbb{U}\rightarrow(-1,1)$ be a harmonic function such that $u(0)=0$ and $\nabla u(0)=(0,0)$. Then, for all $z\in\mathbb{U}$,
\begin{equation*}
    |u(z)|\leqslant\frac{4}{\pi}\arctan{|z|^2}.
\end{equation*}
\end{corollary}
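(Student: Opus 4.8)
The plan is to derive this directly from Theorem~\ref{th:main1} by specializing the parameters. The hypotheses $u(0)=0$ and $\nabla u(0)=(0,0)$ are precisely conditions (R1) and (R2) with $b=0$ and $d=0$; the admissibility requirement in (R2) holds trivially since $d=0\leqslant\frac{4}{\pi}\cos 0=\frac{4}{\pi}$. So no new work is needed to invoke the theorem, and the entire task reduces to evaluating the bounds in \eqref{ineq:main} at these parameter values.

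First I would compute the constants. With $b=0$ we get $a=\tan\frac{0\cdot\pi}{4}=0$, and with $d=0$ we get $c=\frac{\pi}{4}\frac{1}{\cos 0}\cdot 0=0$. The key observation is then that the automorphism $\varphi_0$ degenerates to the identity: for any $w\in\mathbb{U}$ one has $\varphi_0(w)=\frac{0+w}{1+0\cdot w}=w$. Applying this to both occurrences of $\varphi$ in \eqref{ineq:main}, the inner factor $\varphi_c(|z|)=\varphi_0(|z|)=|z|$, so the argument of $\arctan$ on the right becomes $\varphi_a(|z|\cdot|z|)=\varphi_0(|z|^2)=|z|^2$, and on the left it becomes $\varphi_0(-|z|^2)=-|z|^2$.

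Finally I would assemble the two-sided estimate. Using that $\arctan$ is an odd function, $\frac{4}{\pi}\arctan(-|z|^2)=-\frac{4}{\pi}\arctan|z|^2$, so Theorem~\ref{th:main1} yields
\begin{equation*}
-\frac{4}{\pi}\arctan|z|^2\leqslant u(z)\leqslant\frac{4}{\pi}\arctan|z|^2,\quad\mbox{for all}\quad z\in\mathbb{U},
\end{equation*}
which is exactly the claimed bound $|u(z)|\leqslant\frac{4}{\pi}\arctan|z|^2$. There is essentially no obstacle here beyond recognizing the degenerate case of the M\"obius factors; the only point requiring a word of care is that $c=0\neq 1$, so the convention in the remark following Theorem~\ref{th:main1} (where $\varphi_c\equiv 1$ when $c=1$) does not apply, and $\varphi_c$ is genuinely the identity map rather than the constant $1$.
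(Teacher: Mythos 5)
Your proposal is correct and is exactly the intended derivation: the paper states this corollary immediately after Theorem~\ref{th:main1} without a separate proof, precisely because it follows by specializing $b=0$, $d=0$ (hence $a=c=0$, so $\varphi_a$ and $\varphi_c$ are the identity) and using that $\arctan$ is odd, just as you did. Your remark distinguishing the degenerate identity case $c=0$ from the convention for $c=1$ is a nice point of care, and nothing is missing.
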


\begin{theorem}\label{th:main2}
Let $f:\mathbb{U}\rightarrow\mathbb{U}$ be a harmonic function such that:
\begin{itemize}
  \item[(C1)] $f(0)=0$ and
  \item[(C2)] $\|df(0)\|=d$, where $\displaystyle d\leqslant\frac{4}{\pi}$.
\end{itemize}
Then, for all $z\in\mathbb{U}$
\begin{equation}\label{main2:ineq}
    |f(z)|\leqslant\frac{4}{\pi}\arctan\big(|z|\varphi_{C}(|z|)\big),
\end{equation}
where $\displaystyle C=\frac{\pi}{4}d$.
\end{theorem}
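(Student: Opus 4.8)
The plan is to reduce the complex-valued estimate to the real-valued estimate of Theorem~\ref{th:main1} by projecting $f$ onto the direction of $f(z)$, exactly as Theorem~\ref{th:schwar1complex} is obtained from Theorem~\ref{th:schwhar1}. First I would fix an arbitrary $z_0\in\mathbb{U}$; if $f(z_0)=0$ then (\ref{main2:ineq}) is trivial, so I may assume $f(z_0)\neq 0$ and write $f(z_0)=|f(z_0)|e^{i\theta}$. Consider the real-valued function $u=\re(e^{-i\theta}f)$. Since multiplication by $e^{-i\theta}$ is an isometry of $\mathbb{C}$, the map $e^{-i\theta}f$ sends $\mathbb{U}$ into $\mathbb{U}$, whence $u:\mathbb{U}\rightarrow(-1,1)$; moreover $u$ is harmonic, $u(0)=\re(e^{-i\theta}f(0))=0$, and $u(z_0)=\re|f(z_0)|=|f(z_0)|$. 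Thus a bound on $u(z_0)$ is precisely a bound on $|f(z_0)|$, and the parameter $b$ of Theorem~\ref{th:main1} will be $0$.

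The main point to verify carefully is hypothesis {\rm(R2)} for $u$, namely $|\nabla u(0)|\leqslant d$. Writing $g=e^{-i\theta}f=u+iv$, the differential $dg(0)$ is the composition of $df(0)$ with the rotation by $-\theta$, so $\|dg(0)\|=\|df(0)\|=d$ because an orthogonal map preserves the operator norm. The first row of the Jacobian matrix of $g$ is $\nabla u=(u_x,u_y)$, and the norm of any row of a matrix is at most its operator norm, since $\|dg\|=\|dg^{\mathrm{T}}\|\geqslant|dg^{\mathrm{T}}e_1|=|\nabla u|$. Hence $|\nabla u(0)|\leqslant\|dg(0)\|=d\leqslant\frac{4}{\pi}=\frac{4}{\pi}\cos\big(\frac{\pi}{2}\cdot 0\big)$, so $u$ satisfies {\rm(R1)} and {\rm(R2)} of Theorem~\ref{th:main1} with $b=0$. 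This rotation-invariance argument together with the row-versus-operator-norm inequality is the only step requiring genuine care; once it is in place the rest is automatic.

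Finally I would apply Theorem~\ref{th:main1} with $b=0$. Then $a=\tan 0=0$ and $\varphi_0=\mathrm{id}$, so the upper bound reads $u(z)\leqslant\frac{4}{\pi}\arctan(|z|\varphi_c(|z|))$ with $c=\frac{\pi}{4}|\nabla u(0)|$. At $z=z_0$ this gives $|f(z_0)|=u(z_0)\leqslant\frac{4}{\pi}\arctan(|z_0|\varphi_c(|z_0|))$. Since $c=\frac{\pi}{4}|\nabla u(0)|\leqslant\frac{\pi}{4}d=C$ and since a direct computation yields $\frac{\partial}{\partial c}\varphi_c(t)=\frac{1-t^2}{(1+ct)^2}>0$ for $t\in[0,1)$ (the boundary value $c=1$ being covered by the convention $\varphi_1\equiv 1$), the map $c\mapsto\varphi_c(|z_0|)$ is increasing, so $\varphi_c(|z_0|)\leqslant\varphi_C(|z_0|)$. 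By monotonicity of $\arctan$ we conclude $|f(z_0)|\leqslant\frac{4}{\pi}\arctan(|z_0|\varphi_C(|z_0|))$, and since $z_0\in\mathbb{U}$ was arbitrary this is exactly (\ref{main2:ineq}).
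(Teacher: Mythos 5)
Your proposal is correct, and its skeleton coincides with the paper's: both reduce to Theorem \ref{th:main1} by projecting $f$ onto the direction of $f(z_0)$ (your $u=\re(e^{-i\theta}f)$ is exactly the paper's $U=\cos\theta\,u+\sin\theta\,v$ with the paper's final choice of $\theta$), both apply that theorem with $b=0$, and both finish with the same monotonicity observation that $c\mapsto\varphi_c(|z|)$ is increasing (the paper's Lemma \ref{lem:phiAincreasing}). Where you genuinely diverge is the verification of hypothesis {\rm(R2)}: the paper keeps $\theta$ arbitrary and invokes Colonna's Lemma \ref{lem:colonna} to compute the exact identity $\max_{\theta}|\nabla U(z)|=|f_z(z)|+|f_{\overline{z}}(z)|=\|df(z)\|$, whereas you fix the one relevant $\theta$ and obtain the needed one-sided bound $|\nabla u(0)|\leqslant\|df(0)\|$ from two elementary linear-algebra facts: precomposition with multiplication by $e^{-i\theta}$ (an orthogonal map) preserves the operator norm, and the Euclidean norm of a row of a matrix is at most its operator norm, via $|\nabla u|=|dg^{\mathrm{T}}e_1|\leqslant\|dg^{\mathrm{T}}\|=\|dg\|$. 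Your route is more self-contained --- it dispenses with Colonna's trigonometric maximum lemma entirely, which is the only imported ingredient in the paper's argument --- while the paper's computation yields the stronger exact equality (of independent interest, since it identifies $\|df\|$ as the maximal directional gradient), of which only the inequality is actually used. Both treatments of the degenerate cases ($f(z_0)=0$, and $C=1$ via the convention $\varphi_1\equiv1$) are handled adequately in your write-up.
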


\begin{corollary}
Let $f:\mathbb{U}\rightarrow\mathbb{U}$ be a harmonic function such that $f(0)=0$ and $\|df(0)\|=0$. Then, for all $z\in\mathbb{U}$,
\begin{equation*}
    |f(z)|\leqslant\frac{4}{\pi}\arctan{|z|^2}.
\end{equation*}
\end{corollary}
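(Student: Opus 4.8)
The plan is to deduce Theorem \ref{th:main2} from Theorem \ref{th:main1} by a rotation-and-projection argument, in direct analogy with the way Theorem \ref{th:schwar1complex} is obtained from Theorem \ref{th:schwhar1}. I would fix an arbitrary point $z_0\in\mathbb{U}$ (the case $z_0=0$ being trivial) and choose $\theta\in\mathbb{R}$ with $e^{-i\theta}f(z_0)=|f(z_0)|\geqslant 0$. Setting $u=\re(e^{-i\theta}f)$, this $u$ is a real harmonic function with $|u|\leqslant|f|<1$, so $u:\mathbb{U}\to(-1,1)$; moreover $u(0)=\re(e^{-i\theta}f(0))=0$ by (C1) and $u(z_0)=|f(z_0)|$ by the choice of $\theta$.

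The next step would be to bound $|\nabla u(0)|$ by $\|df(0)\|$. Writing $f=f_1+if_2$, one has $u=\cos\theta\,f_1+\sin\theta\,f_2$, so the row vector $\nabla u=(u_x,u_y)$ equals $(\cos\theta,\sin\theta)\,df$; since $(\cos\theta,\sin\theta)$ is a unit vector and a matrix and its transpose share the same operator norm, this gives $|\nabla u(z)|\leqslant\|df(z)\|$ everywhere, and in particular $|\nabla u(0)|\leqslant\|df(0)\|=d\leqslant\frac{4}{\pi}$. Hence $u$ fulfils {\rm(R1)} and {\rm(R2)} of Theorem \ref{th:main1} with $b=0$ and with the value $d'=|\nabla u(0)|$ in place of $d$.

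I would then apply the upper estimate of Theorem \ref{th:main1} to $u$. With $b=0$ we get $a=0$, so $\varphi_a$ is the identity and the parameter is $c=\frac{\pi}{4}d'$, yielding $|f(z_0)|=u(z_0)\leqslant\frac{4}{\pi}\arctan\big(|z_0|\varphi_{c}(|z_0|)\big)$. To reach (\ref{main2:ineq}) I would use that, for fixed $r\in(0,1)$, the quantity $\varphi_t(r)=\frac{t+r}{1+tr}$ is increasing in $t$ (its $t$-derivative is $\frac{1-r^2}{(1+tr)^2}>0$); since $c=\frac{\pi}{4}d'\leqslant\frac{\pi}{4}d=C$, monotonicity of $\varphi_t(r)$ and of $\arctan$ upgrade the bound to $\frac{4}{\pi}\arctan\big(|z_0|\varphi_{C}(|z_0|)\big)$. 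As $z_0$ is arbitrary, this completes the argument.

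The step I expect to be the main obstacle is precisely this last upgrade: Theorem \ref{th:main1} is stated with an equality constraint $|\nabla u(0)|=d$, while the projected $u$ only satisfies $|\nabla u(0)|\leqslant d$. The resolution is the monotonicity of the right-hand side of (\ref{ineq:main}) in $c$, so that replacing $c$ by the larger $C=\frac{\pi}{4}d$ only weakens the estimate. The accompanying verification that $\nabla u=(\cos\theta,\sin\theta)\,df$ and the ensuing operator-norm inequality is the only genuinely computational point, and it is brief.
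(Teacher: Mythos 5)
Your proposal is correct and takes essentially the same route as the paper: there the corollary is the specialization $d=0$ (hence $C=0$, $\varphi_C(|z|)=|z|$) of Theorem~\ref{th:main2}, whose proof is exactly your rotation-and-projection argument --- the auxiliary function $U=\cos\theta\,u+\sin\theta\,v$ (your $\re(e^{-i\theta}f)$), the application of Theorem~\ref{th:main1} with $b=0$, and the monotonicity of $t\mapsto\varphi_t(r)$ (the paper's Lemma~\ref{lem:phiAincreasing}). The only cosmetic difference is that you obtain $|\nabla U(0)|\leqslant\|df(0)\|$ by the transpose/operator-norm inequality, while the paper deduces it from Colonna's identity $\max_{\theta}|\nabla U(z)|=\|df(z)\|$ (Lemma~\ref{lem:colonna}); your inequality is all that is needed.
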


\begin{remark}
Formally, if $C=1$ then function $\varphi_C$ is not defined. In this case we mean that $\varphi_{C}(|z|)=1$ for all $z\in\mathbb{U}$.
\end{remark}

\section{Proofs of main results}

\subsection{Proof of Theorem \ref{th:main1}} In order to prove Theorem \ref{th:main1}, we recall the following definitions and one lemma from \cite{MMandMS}.

Let $\lambda>0$ be arbitrary. By $\overline{D}_{\lambda}(\zeta)=\{z\in\mathbb{U}:d_{\mathbb{U}}(z,\zeta)\leqslant\lambda\}$ (respectively $\overline{S}_{\lambda}(\zeta)=\{z\in\mathbb{S}:d_{\mathbb{S}}(z,\zeta)\leqslant\lambda\}$) we denote the hyperbolic closed disc in $\mathbb{U}$ (respectively in $\mathbb{S}$) with hyperbolic center $\zeta\in\mathbb{U}$ (respectively $\zeta\in\mathbb{S}$) and hyperbolic radius $\lambda$. Specially, if $\zeta=0$  we omit  $\zeta$ from the notations.


Let $r\in(0,1)$ be arbitrary. By $\overline{U}_r$ we denote Euclidean closed disc
$$\{z\in\mathbb{C}: |z|\leqslant r\}.$$
Also, let
\begin{equation*}
    \lambda(r)=d_{\mathbb{U}}(r,0)=\ln\frac{1+r}{1-r}=2\artanh{r}.
\end{equation*}
Since $\displaystyle d_{\mathbb{U}}(z,0)=\ln\frac{1+|z|}{1-|z|}=2\artanh{|z|}$, for all $z\in\mathbb{U}$, we have
\begin{equation*}
    \overline{D}_{\lambda(r)}=\left\{z\in\mathbb{C}:2\artanh{|z|}\leqslant 2\artanh{r}\right\}=\{z\in\mathbb{C}:|z|\leqslant r\}=\overline{U}_r.
\end{equation*}
Let $b\in(-1,1)$ be arbitrary and $\displaystyle a=\tan{\frac{b\pi}{4}}$. By Theorem \ref{th:schwpick} we have
\begin{equation*}
    \overline{S}_{\lambda(r)}(b)=\overline{S}_{\lambda(r)}(\phi(\varphi_{a}(0)))=\phi(\varphi_{a}( \overline{D}_{\lambda(r)}))=\phi(\varphi_{a}(\overline{U}_r)),
\end{equation*}
where $\phi$ is conformal isomorphism from $\mathbb{U}$ onto $\mathbb{S}$ defined in subsection \ref{subsec:hyperbolic}.
Further, one can show that (see Figure \ref{fig:1}):
\begin{itemize}
  \item[i)] $\overline{S}_{\lambda(r)}(b)$ is symmetric with respect to the $x$-axis;
  \item[ii)] $\overline{S}_{\lambda(r)}(b)$ is Euclidean convex (see \cite[Theorem 7.11]{BeardonMinda}).
\end{itemize}

\begin{figure}[H]
\includegraphics[width=\linewidth]{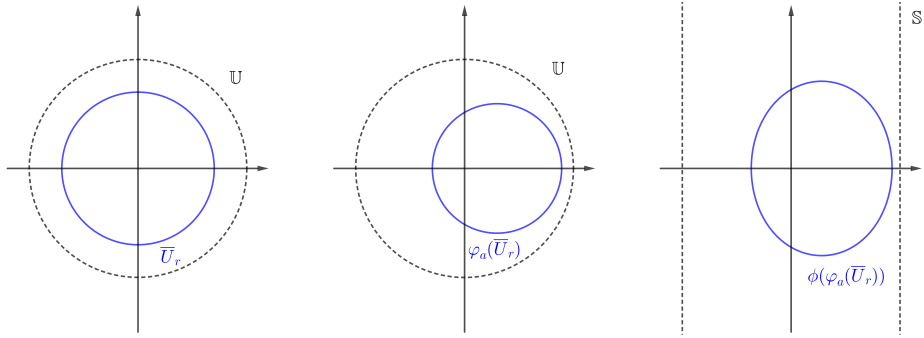}
\caption{Disks $\overline{U}_r$, $\varphi_{a}(\overline{U}_r)$ and $\phi(\varphi_{a}(\overline{U}_r))$}\label{fig:1}
\end{figure}

By i)-ii) it is readable that we have:

\begin{lemma}[{\cite[Lemma 3]{MMandMS}}]\label{lem:hypdisk0}
Let $r\in(0,1)$ and $b\in(-1,1)$ be arbitrary. Then
\begin{equation*}
    R_{e}(\overline{S}_{\lambda(r)}(b))=\left[\frac{4}{\pi}\arctan\varphi_a(-r),\frac{4}{\pi}\arctan\varphi_a(r)\right]
\end{equation*}
Here $\displaystyle a=\tan{\frac{b\pi}{4}}$ and $R_{e}:\mathbb{C}\rightarrow\mathbb{R}$ is defined by $R_{e}{(z)}=\re{z}$.
\end{lemma}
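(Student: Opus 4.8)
The plan is to read off the interval $R_e(\overline{S}_{\lambda(r)}(b))$ directly from the geometric description $\overline{S}_{\lambda(r)}(b)=\phi(\varphi_{a}(\overline{U}_r))$ established just above the statement, using the two structural properties i) and ii) as the only nontrivial input. First I would note that $\overline{S}_{\lambda(r)}(b)$ is a compact connected subset of $\mathbb{S}$, being the image of the compact connected disc $\overline{U}_r$ under the continuous maps $\varphi_{a}$ and $\phi$. Consequently its image under the continuous function $R_e=\re$ is a closed bounded interval $[m,M]$, where $m=\min\{\re z:z\in\overline{S}_{\lambda(r)}(b)\}$ and $M=\max\{\re z:z\in\overline{S}_{\lambda(r)}(b)\}$. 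It remains to identify $m$ and $M$.

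The key step is to show that both extrema are attained at real points. Suppose $M=\re z_0$ for some $z_0=x_0+iy_0\in\overline{S}_{\lambda(r)}(b)$. By property i) (symmetry with respect to the $x$-axis) the point $\overline{z_0}=x_0-iy_0$ also lies in $\overline{S}_{\lambda(r)}(b)$, and by property ii) (Euclidean convexity) the midpoint $\tfrac{1}{2}(z_0+\overline{z_0})=x_0$ lies in $\overline{S}_{\lambda(r)}(b)$ as well. This midpoint is real and has $\re=x_0=M$, so the maximum of $\re$ is realized on $\overline{S}_{\lambda(r)}(b)\cap\mathbb{R}$. The identical argument applied to $m$ shows that $m$ and $M$ are exactly the extreme values of $\re$ over the real slice $\overline{S}_{\lambda(r)}(b)\cap\mathbb{R}$.

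Finally I would compute that real slice explicitly. Since $a\in(-1,1)$, the map $\varphi_{a}$ is a Möbius transformation with real coefficients and hence maps real points to real points and non-real points to non-real points; the same holds for $\phi$, which sends the real diameter of $\mathbb{U}$ to the real segment of $\mathbb{S}$. Therefore a point $\phi(\varphi_{a}(z))$ is real precisely when $z\in\overline{U}_r\cap\mathbb{R}=[-r,r]$, giving $\overline{S}_{\lambda(r)}(b)\cap\mathbb{R}=\phi(\varphi_{a}([-r,r]))$. By property $2^\circ$ this equals $\phi([\varphi_{a}(-r),\varphi_{a}(r)])$, and since $\phi(t)=\tfrac{4}{\pi}\arctan t$ is increasing on $(-1,1)$, the slice is the interval $\left[\tfrac{4}{\pi}\arctan\varphi_{a}(-r),\tfrac{4}{\pi}\arctan\varphi_{a}(r)\right]$. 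Its endpoints are $m$ and $M$, which is the claimed formula.

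The main obstacle is the middle step, namely justifying that the horizontal extent of the disc is realized along the real axis rather than at some interior or off-axis point; but the combination of Euclidean convexity and reflection symmetry makes this immediate via the midpoint argument. Everything else reduces to bookkeeping with the conformal maps $\varphi_{a}$ and $\phi$ and the monotonicity recorded in properties $1^\circ$ and $2^\circ$.
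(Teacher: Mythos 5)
Your proof is correct and is essentially the paper's own argument: the paper likewise deduces the lemma from the identification $\overline{S}_{\lambda(r)}(b)=\phi(\varphi_{a}(\overline{U}_r))$ together with the symmetry property i) and the Euclidean convexity property ii), merely declaring the conclusion ``readable'' from these facts. Your midpoint argument and the computation of the real slice via property $2^\circ$ and the monotonicity of $\phi$ are exactly the details that reading entails.
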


\begin{proof}[Proof of Theorem \ref{th:main1}]
Applying the strip method we obtain that there exists holomorphic function $f:\mathbb{U}\rightarrow\mathbb{S}$ such that $\re{f}=u$, $f(0)=b$ and $|f'(0)|=d$.
Also, we have
\begin{equation*}
|f^h(0)|=\frac{\rho_{\mathbb{S}}(f(0))}{\rho_{\mathbb{U}}(0)}|f'(0)|=\frac{\pi}{4}\frac{1}{\cos{\displaystyle\frac{\pi}{2}b}}d=c.
    \end{equation*}
Let $z\in\mathbb{U}$ be arbitrary. By Theorem \ref{th:BeardonCarne}, taking $\Omega_1=\mathbb{U}$ and $\Omega_2=\mathbb{S}$, we have
\begin{eqnarray*}
  d_{\mathbb{S}}(f(z),b) &\leqslant& \log(\cosh{d_{\mathbb{U}}(z,0)}+|f^h(0)|\sinh{d_{\mathbb{U}}(z,0)})  \\
                            &=&\log\left(\frac{1+|z|^2+2c|z|}{1-|z|^2}\right).
\end{eqnarray*}
Now, we chose a point $R(z)\in[0,1)$ such that
\begin{equation}\label{fla:defRz}
    d_{\mathbb{U}}(R(z),0)=\log\left(\frac{1+|z|^2+2c|z|}{1-|z|^2}\right).
\end{equation}
Note that the equality (\ref{fla:defRz}) is equivalent to the equality
\begin{equation*}
    \frac{1+R(z)}{1-R(z)}=\frac{1+|z|^2+2c|z|}{1-|z|^2}
\end{equation*}
and hence we obtain $\displaystyle R(z)=|z|\frac{c+|z|}{1+c|z|}=|z|\varphi_{c}(|z|)$.
Therefore
\begin{equation*}
    d_{\mathbb{S}}(f(z),b)\leqslant d_{\mathbb{U}}(|z|\varphi_{c}(|z|),0),
\end{equation*}
i.e. $f(z)\in\overline{S}_{\lambda\big(|z|\varphi_{c}(|z|)\big)}(b)$. Finally, by Lemma \ref{lem:hypdisk0}
\begin{equation*}
    u(z)=\re f(z)\in\left[\frac{4}{\pi}\arctan{\varphi_{a}\big(-|z|\varphi_{c}(|z|)\big)}, \frac{4}{\pi}\arctan{\varphi_{a}\big(|z|\varphi_{c}(|z|)\big)}\right].
\end{equation*}

If $z=0$ then it is clear that the inequality (\ref{ineq:main}) is sharp.

In order to prove that inequality (\ref{ineq:main}) is sharp in the case $z\in\mathbb{U}\backslash\{0\}$, we first define the functions $\widehat{\Phi},\widetilde{\Phi}:\mathbb{U}\rightarrow\mathbb{S}$ as follows
\begin{equation*}
    \widehat{\Phi}(\zeta)=\phi\Big(\varphi_{a}\big(-\zeta\cdot\varphi_{c}(\zeta)\big)\Big)
\end{equation*}
and
\begin{equation*}
    \widetilde{\Phi}(\zeta)=\phi\Big(\varphi_{a}\big(\zeta\cdot\varphi_{c}(\zeta)\big)\Big).
\end{equation*}

Let $z\in\mathbb{U}\backslash\{0\}$. Define the functions $\widehat{u}^z,\widetilde{u}^z:\mathbb{U}\rightarrow(-1,1)$ (which depend on $z$) on the following way:
\begin{equation*}
    \widehat{u}^z(\zeta)=\re{\widehat{\Phi}}(e^{-i\arg{z}}\zeta)
\end{equation*}
and
\begin{equation*}
    \widetilde{u}^z(\zeta)=\re{\widetilde{\Phi}}(e^{-i\arg{z}}\zeta).
\end{equation*}
It is easy to check that the functions $\widehat{u}^z$ and $ \widetilde{u}^z$ are harmonic and that they satisfy assumptions \rm{(R1)} and \rm{(R2)}. Also
\begin{equation*}
    \widehat{u}^z(z)=\frac{4}{\pi}\arctan{\varphi_{a}\big(-|z|\varphi_{c}(|z|)\big)}
\end{equation*}
and
\begin{equation*}
    \widetilde{u}^z(z)=\frac{4}{\pi}\arctan{\varphi_{a}\big(|z|\varphi_{c}(|z|)\big)}.
\end{equation*}
\end{proof}

\subsection{Proof of Theorem \ref{th:main2}}

In order to prove Theorem \ref{th:main2}, we need two lemmas.

\begin{lemma}[{\cite[Lemma 1]{colonna}}]\label{lem:colonna}
Let $z,w\in\mathbb{C}$. Then
\begin{equation*}
    \max\limits_{\theta\in\mathbb{R}}|w\cos{\theta}+z\sin{\theta}|=\frac{1}{2}(|w+iz|+|w-iz|).
\end{equation*}
\end{lemma}

\begin{lemma}\label{lem:phiAincreasing}
Fix $z\in\mathbb{U}$. Function $h:(-1,1)\rightarrow\mathbb{R}$ defined by $\displaystyle h(t)=\frac{t+|z|}{1+t|z|}$ is monotonically increasing.
\end{lemma}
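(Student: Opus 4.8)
The plan is to recognize that $h$ is, after relabeling the variable, exactly one of the disk automorphisms $\varphi_\alpha$ whose monotonicity was already recorded in the introduction. Writing $r=|z|\in[0,1)$, I observe that
\begin{equation*}
h(t)=\frac{t+r}{1+rt}=\varphi_{r}(t),
\end{equation*}
where $\varphi_r$ is the conformal automorphism of $\mathbb{U}$ defined earlier, now viewed as a function of the real variable $t\in(-1,1)$. Since $r\in[0,1)\subset(-1,1)$, property $1^\circ$ asserts precisely that $\varphi_r$ is increasing on $(-1,1)$, which is the claim.

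For a fully self-contained argument I would instead differentiate directly. First I note that for $t\in(-1,1)$ and $r\in[0,1)$ one has $|rt|<1$, so $1+rt>0$ and $h$ is a well-defined, differentiable function on $(-1,1)$. A short quotient-rule computation then gives
\begin{equation*}
h'(t)=\frac{(1+rt)-(t+r)r}{(1+rt)^2}=\frac{1-r^2}{(1+rt)^2}.
\end{equation*}
Because $r=|z|<1$, the numerator $1-r^2$ is strictly positive, and the denominator is a positive square, so $h'(t)>0$ throughout $(-1,1)$. Hence $h$ is (strictly) monotonically increasing.

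There is no genuine obstacle here. The only point requiring a moment's care is confirming that the denominator $1+rt$ never vanishes on the domain, so that $h$ is smooth and the sign of $h'$ is unambiguous; this follows at once from $|rt|<1$. The degenerate case $z=0$, giving $h(t)=t$, is covered by the same formula with $r=0$.
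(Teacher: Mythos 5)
Your proof is correct and, in its self-contained version, is essentially identical to the paper's: the paper's entire proof is the observation that $h'(t)=\frac{1-|z|^2}{(1+t|z|)^2}>0$ on $(-1,1)$, which is exactly your quotient-rule computation. The preliminary remark that $h=\varphi_{|z|}$ and that the claim is property $1^\circ$ from the introduction is a nice shortcut but does not change the substance.
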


\begin{proof}
The proof follows directly from the fact $\displaystyle h'(t)=\frac{1-|z|^2}{(1+t|z|)^2}>0$ for all $t\in(-1,1)$.
\end{proof}

\begin{proof}[Proof of Theorem \ref{th:main2}] Denote by $u$ and $v$ real and imaginary part of $f$, respectively. Let $\theta\in\mathbb{R}$ be arbitrary. It is clear that function $U$ defined by
\begin{equation*}
    U(z)=\cos{\theta}u(z)+\sin{\theta}v(z)
\end{equation*}
is harmonic on the unit disk $\mathbb{U}$, $U(0)=0$ and $|U(z)|\leqslant|f(z)|<1$ for all $z\in\mathbb{U}$.

By Theorem \ref{th:main1} we have
\begin{equation}\label{proofmain2:fla1}
    |U(z)|\leqslant\frac{4}{\pi}\arctan\big(|z|\varphi_{c}(|z|)\big), \quad \mbox{ for all }\quad z\in\mathbb{U},
\end{equation}
where $\displaystyle c=\frac{\pi}{4}|\nabla U(0)|$.

Since
\begin{eqnarray*}
  \nabla U(z) &=& \cos{\theta}\nabla u(z)+\sin{\theta}\nabla v(z) \\
              &=& \cos{\theta}\left(u_x(z)+iu_{y}(z)\right)+\sin{\theta}\left(v_x(z)+iv_{y}(z)\right),
\end{eqnarray*}
by Lemma \ref{lem:colonna} we get
\begin{eqnarray*}
  \max\limits_{\theta\in\mathbb{R}}|\nabla U(z)| &=& \max\limits_{\theta\in\mathbb{R}}|\cos{\theta}\left(u_x(z)+iu_{y}(z)\right)+\sin{\theta}\left(v_x(z)+iv_{y}(z)\right)|  \\
    &=& \frac{1}{2}\big(|u_{x}(z)+iu_{y}(z)+i(v_{x}(z)+iv_{y}(z))| \\
    & & {}+|u_{x}(z)+iu_{y}(z)-i(v_{x}(z)+iv_{y}(z))|\big) \\
    &=& \frac{1}{2}\left(\sqrt{(u_x(z)-v_y(z))^2+(u_y(z)+v_x(z))^2}\right.\\
    & & \left.{}+\sqrt{(u_x(z)+v_y(z))^2+(u_y(z)-v_x(z))^2}\right)\\
    &=& |f_{z}(z)|+|f_{\overline{z}}(z)|\\
    &=&\|df(z)\|.
\end{eqnarray*}
Hence
\begin{equation*}
    |\nabla U(0)|\leqslant\|df(0)\|
\end{equation*}
and
\begin{equation*}
    c=\frac{\pi}{4}|\nabla U(0)|\leqslant\frac{\pi}{4}\|df(0)\|=\frac{\pi}{4}d=C.
\end{equation*}
By Lemma \ref{lem:phiAincreasing}, from (\ref{proofmain2:fla1}) we obtain
\begin{equation}\label{proofmain2:fla3}
    |U(z)|\leqslant\frac{4}{\pi}\arctan\big(|z|\varphi_{C}(|z|)\big), \quad \mbox{ for all }\quad z\in\mathbb{U}.
\end{equation}
Finally, let $z\in\mathbb{U}$ be such that $f(z)\neq0$ and let $\theta$ such that
\begin{equation*}
    \cos{\theta}=\frac{u(z)}{|f(z)|} \quad \mbox{ and } \quad \sin{\theta}=\frac{v(z)}{|f(z)|}.
\end{equation*}
Then $U(z)=|f(z)|$ and hence from (\ref{proofmain2:fla3}) we get the inequality (\ref{main2:ineq}).

If $z\in\mathbb{U}$ be such that $f(z)=0$ then the inequality (\ref{main2:ineq}) is trivial.
\end{proof}

\section{Appendix}

\subsection{Harmonic quasiregular mappings and the Schwarz-Pick type estimates}
Taking into account Remark \ref{rm:problem} we mention some results related to harmonic quasiregular mappings.

Let $D$ and $G$ be domains in $\mathbb{C}$ and $K\geqslant1$. A $C^1$ mapping $f:D\rightarrow G$ we call $K-$quasiregular mapping if
\begin{equation*}
     \|df(z)\|^2\leqslant K|J_{f}(z)|, \quad \mbox{ for all } z\in D.
\end{equation*}
Here $J_{f}$ is Jacobian determinant of $f$. In particular, $K-$quasiconformal mapping is the $K-$quasiregular mapping which is a homeomorphism.

In \cite{MKandMM}, M. Kne\v zevi\'c and M. Mateljevi\'c proved the following result (which can be considered as generalization of Theorem \ref{th:colonna}):
\begin{theorem}
Let $f:\mathbb{U}\rightarrow\mathbb{U}$ be a harmonic $K-$quasiconformal mapping. Then
$$\displaystyle \|df(z)\|\leqslant K\frac{1-|f(z)|^2}{1-|z|^2},\quad \mbox{for all}\quad z\in\mathbb{U}.$$
\end{theorem}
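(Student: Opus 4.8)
The plan is to recast the asserted inequality as a comparison of two conformal metrics on $\mathbb{U}$ and then invoke the Ahlfors form of the Schwarz lemma (a metric of Gaussian curvature at most $-1$ is dominated by the hyperbolic metric $\rho_{\mathbb{U}}$). First I would use the canonical representation of a harmonic mapping: write $f=h+\overline{g}$ with $h,g$ holomorphic on $\mathbb{U}$, so that $f_z=h'$, $f_{\overline{z}}=\overline{g'}$, $\|df\|=|h'|+|g'|$ and $J_f=|h'|^2-|g'|^2$. Introduce the dilatation $\omega=g'/h'$, so that $\|df\|=|h'|(1+|\omega|)$ and $J_f=|h'|^2(1-|\omega|^2)$. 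Since $f$ is a $K$-quasiconformal homeomorphism it is sense-preserving, and the defining inequality $\|df\|^2\leqslant K J_f$ rearranges to $\|df\|\leqslant K(|h'|-|g'|)$, i.e. $1+|\omega|\leqslant K(1-|\omega|)$ after dividing by $|h'|$. By Lewy's theorem a harmonic homeomorphism has non-vanishing Jacobian, hence $h'$ never vanishes; thus $\omega$ is holomorphic and smooth on all of $\mathbb{U}$ with $|\omega|<1$ everywhere.

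Next I would set $P=1-|f|^2$ and introduce the conformal metric $\nu(z)|dz|$ with density $\nu=\dfrac{2\|df(z)\|}{K\,(1-|f(z)|^2)}$. Because $|f|<1$ and (by Lewy) $\|df\|>0$, this $\nu$ is a smooth positive density on all of $\mathbb{U}$, with no branch points to excise, and the assertion of the theorem is exactly $\nu\leqslant\rho_{\mathbb{U}}$. By the Ahlfors lemma it therefore suffices to verify that $\nu|dz|$ has curvature at most $-1$, that is $\Delta\log\nu\geqslant\nu^2$.

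For the curvature estimate I would record three Laplacians. A direct computation gives $-\Delta\log P=\dfrac{4}{P^2}\big(P(|h'|^2+|g'|^2)+|h'\overline{f}+f g'|^2\big)$; moreover $\log|h'|$ is harmonic, and for holomorphic $\omega$ one finds $\Delta\log(1+|\omega|)=\dfrac{|\omega'|^2}{|\omega|(1+|\omega|)^2}\geqslant 0$. Since $\log\nu=\mathrm{const}+\log|h'|+\log(1+|\omega|)-\log P$, discarding the nonnegative $\omega'$-term reduces $\Delta\log\nu\geqslant\nu^2$ to the pointwise estimate $P(|h'|^2+|g'|^2)+|h'\overline{f}+f g'|^2\geqslant\dfrac{|h'|^2(1+|\omega|)^2}{K^2}$. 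Here the quasiconformality enters decisively: the rearranged hypothesis $1+|\omega|\leqslant K(1-|\omega|)$ gives $\dfrac{(1+|\omega|)^2}{K^2}\leqslant(1-|\omega|)^2$, so the right-hand side is at most $(|h'|-|g'|)^2$, and it is enough to prove $P(|h'|^2+|g'|^2)+|h'\overline{f}+f g'|^2\geqslant(|h'|-|g'|)^2$. Expanding $|h'\overline{f}+fg'|^2=|f|^2(|h'|^2+|g'|^2)+2\re(h'\overline{g'}\,\overline{f}^{\,2})$ and using $P+|f|^2=1$, the left side becomes $(|h'|^2+|g'|^2)+2\re(h'\overline{g'}\,\overline{f}^{\,2})$, whence the inequality collapses to $2|h'||g'|+2\re(h'\overline{g'}\,\overline{f}^{\,2})\geqslant 0$, which holds because $|\re(h'\overline{g'}\,\overline{f}^{\,2})|\leqslant|h'||g'||f|^2\leqslant|h'||g'|$.

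The main obstacle is precisely the curvature computation of the previous paragraph: one must guess the correct auxiliary metric (with the factor $K$ in the denominator and the weight $1-|f|^2$), organize the Laplacians so that the one genuinely dangerous contribution $\Delta\log(1+|\omega|)$ carries a favorable sign, and recognize that the quasiconformal bound $(1+|\omega|)^2\leqslant K^2(1-|\omega|)^2$ is exactly what converts the residual inequality into the elementary $2|h'||g'|(1-|f|^2)\geqslant 0$. Once curvature $\leqslant-1$ is established, Lewy's theorem guarantees that $\nu$ is a bona fide smooth metric, so the Ahlfors lemma applies with no regularization and yields $\nu\leqslant\rho_{\mathbb{U}}$, which is the claimed bound $\|df(z)\|\leqslant K\dfrac{1-|f(z)|^2}{1-|z|^2}$.
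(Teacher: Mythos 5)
The paper itself gives no proof of this theorem: it is quoted in the Appendix as a result of Kne\v zevi\'c and Mateljevi\'c \cite{MKandMM}, so there is no internal proof to compare yours with, and your Ahlfors--Schwarz route is in any case genuinely different from the Schwarz--Pick-type arguments of that source. The core of your argument is correct, and I checked the computations: writing $f=h+\overline{g}$, reducing quasiconformality to $|h'|+|g'|\leqslant K(|h'|-|g'|)$ (in the sense-preserving case, to which one passes by replacing $f$ with $\overline{f}$ if necessary, since the paper's definition uses $|J_f|$), and, with $P=1-|f|^2$ and $Q=h'\overline{f}+fg'$,
\begin{gather*}
-\Delta\log P=\frac{4}{P^{2}}\Bigl(P\bigl(|h'|^{2}+|g'|^{2}\bigr)+|Q|^{2}\Bigr),\\
P\bigl(|h'|^{2}+|g'|^{2}\bigr)+|Q|^{2}=|h'|^{2}+|g'|^{2}+2\re\bigl(h'\overline{g'}\,\overline{f}^{\,2}\bigr)\geqslant\bigl(|h'|-|g'|\bigr)^{2}\geqslant\frac{\bigl(|h'|+|g'|\bigr)^{2}}{K^{2}},
\end{gather*}
which, together with the subharmonicity of $\log(1+|\omega|)$, is exactly the curvature bound $\Delta\log\nu\geqslant\nu^{2}$ for your metric $\nu=2\|df\|/\bigl(K(1-|f|^{2})\bigr)$.

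The one genuine gap is your claim that $\nu$ is a \emph{smooth} metric on all of $\mathbb{U}$, ``with no branch points to excise,'' so that ``the Ahlfors lemma applies with no regularization.'' Lewy's theorem gives positivity of $\|df\|=|h'|+|g'|$, not smoothness: at a zero of $g'$ (which, unlike a zero of $h'$, is not excluded) the function $|g'|$ is in general not even $C^{1}$ --- think of $|z|$ at a simple zero --- so $\nu$ is merely continuous there, and your formula $\Delta\log(1+|\omega|)=|\omega'|^{2}/\bigl(|\omega|(1+|\omega|)^{2}\bigr)$ is valid only off the zero set of $\omega$. Hence the smooth Ahlfors lemma does not apply verbatim to $\nu$. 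The repair is routine and stays entirely inside your computation: for each unimodular $\epsilon$ set
\begin{equation*}
\nu_{\epsilon}=\frac{2\,|h'+\epsilon g'|}{K\bigl(1-|f|^{2}\bigr)}.
\end{equation*}
Since $|\epsilon g'|=|g'|<|h'|$, the holomorphic function $h'+\epsilon g'$ never vanishes, so $\nu_{\epsilon}$ is smooth and positive and $\log|h'+\epsilon g'|$ is harmonic; your displayed chain together with $|h'+\epsilon g'|\leqslant|h'|+|g'|$ then gives $\Delta\log\nu_{\epsilon}=-\Delta\log P\geqslant\nu_{\epsilon}^{2}$. Ahlfors' lemma applies to each $\nu_{\epsilon}$ and yields $\nu_{\epsilon}\leqslant\rho_{\mathbb{U}}$; fixing $z$ and choosing $\epsilon$ so that $|h'(z)+\epsilon g'(z)|=|h'(z)|+|g'(z)|$ turns this into $\nu(z)\leqslant\rho_{\mathbb{U}}(z)$, which is the theorem. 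Note this bypasses the term $\Delta\log(1+|\omega|)$ altogether, so its sign analysis becomes unnecessary; alternatively, keep your $\nu$ and invoke the supporting-metric (or distributional-curvature) version of Ahlfors' lemma, the $\nu_{\epsilon}$ being exactly supporting metrics for $\nu$ at every point.
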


Also, one result of this type was obtained by H. H. Chen \cite{hhChen1}:
\begin{theorem}
Let $f:\mathbb{U}\rightarrow\mathbb{U}$ be a harmonic $K-$quasiconformal mapping. Then
$$\displaystyle \|df(z)\|\leqslant\frac{4}{\pi}K\frac{\cos{\displaystyle\left(|f(z)|\pi/2\right)}}{1-|z|^2},\quad \mbox{for all}\quad z\in\mathbb{U}.$$
\end{theorem}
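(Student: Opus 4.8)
The plan is to reduce the estimate to the real-variable harmonic Schwarz--Pick inequality of Theorem~\ref{th:schwpickhar}, applied not to $\re f$ itself but to the real part of $f$ after rotating the \emph{target} so that the value $f(z)$ is turned into a nonnegative real number; the extra factor $K$ is then supplied by quasiconformality, which is exactly what lets one pass from a bound on a single directional derivative to a bound on the full operator norm $\|df\|$. The guiding observation is that the quantity $\cos(\tfrac{\pi}{2}|f(z)|)$ appearing in the desired inequality is precisely the Schwarz--Pick bound for $\re(e^{-i\theta}f)$ when $\theta$ is chosen to align with $f(z)$.

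Concretely, I would fix $z_{0}\in\mathbb{U}$ and first assume $f(z_{0})\neq 0$. Setting $\theta=\arg f(z_{0})$, define the real harmonic function $u=\re\big(e^{-i\theta}f\big)\colon\mathbb{U}\to(-1,1)$, where the codomain is correct because $|u|\leqslant|f|<1$. By construction $u(z_{0})=\re\big(e^{-i\theta}f(z_{0})\big)=|f(z_{0})|$, so the pointwise form \eqref{schwpickhar:fla2} of Theorem~\ref{th:schwpickhar} yields
\begin{equation*}
|\nabla u(z_{0})|\leqslant\frac{4}{\pi}\,\frac{\cos\big(\tfrac{\pi}{2}|f(z_{0})|\big)}{1-|z_{0}|^{2}}.
\end{equation*}
The next step is to relate $|\nabla u|$ to the singular values of $df$. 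A direct differentiation gives $\nabla u=e^{-i\theta}f_{\overline{z}}+e^{i\theta}\,\overline{f_{z}}$, which is exactly the type of expression handled by Lemma~\ref{lem:colonna} and already computed in the proof of Theorem~\ref{th:main2}: as $\theta$ ranges over $\mathbb{R}$, the value $|\nabla u|$ sweeps the interval $\big[\,\big||f_{z}|-|f_{\overline{z}}|\big|,\ |f_{z}|+|f_{\overline{z}}|\,\big]$, whose maximum is $\|df\|$ and whose minimum is the smaller singular value $\ell(z_{0}):=\big||f_{z}(z_{0})|-|f_{\overline{z}}(z_{0})|\big|$. In particular, for our fixed $\theta$ we obtain $\ell(z_{0})\leqslant|\nabla u(z_{0})|$.

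Finally I would invoke $K$-quasiconformality. Since $|J_{f}|=\|df\|\cdot\ell$ (the Jacobian is the product of the singular values for a sense-preserving map), the defining inequality $\|df\|^{2}\leqslant K|J_{f}|$ reads $\|df\|^{2}\leqslant K\,\|df\|\,\ell$, hence $\|df(z_{0})\|\leqslant K\,\ell(z_{0})$. Chaining the three estimates gives
\begin{equation*}
\|df(z_{0})\|\leqslant K\,\ell(z_{0})\leqslant K\,|\nabla u(z_{0})|\leqslant\frac{4}{\pi}\,K\,\frac{\cos\big(\tfrac{\pi}{2}|f(z_{0})|\big)}{1-|z_{0}|^{2}},
\end{equation*}
which is the claim. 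The case $f(z_{0})=0$ is handled by taking any $\theta$ (say $\theta=0$, so $u=\re f$ and $u(z_{0})=0=|f(z_{0})|$), since $\cos(\tfrac{\pi}{2}\cdot 0)=1$ makes the target bound independent of the rotation.

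The conceptual heart of the argument is the recognition that the real harmonic Schwarz--Pick lemma only controls \emph{one} directional derivative of $f$, namely the one in the direction of $f(z)$, and that this is genuinely weaker than controlling $\|df\|$; the whole point of the quasiconformality hypothesis is to close exactly that gap, since $\|df\|\leqslant K\,\ell$ bounds the largest singular value by the smallest, and every directional gradient dominates the smallest. Thus the only place where real care is needed is the double inequality $\ell(z_{0})\leqslant|\nabla u(z_{0})|\leqslant\|df(z_{0})\|$, i.e.\ verifying that the rotated real part is a legitimate directional derivative lying between the two singular values; once that identification is made, the rest is the routine chaining above.
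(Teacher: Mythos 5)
This theorem appears in the paper's Appendix only as a quotation of H.~H.~Chen's result \cite{hhChen1}; the paper gives no proof of it, so there is no in-paper argument to compare yours against line by line. Judged on its own, your proof is correct, and it is self-contained relative to the paper's toolkit. The three pillars all check out: (i) with $\theta=\arg f(z_0)$ the function $u=\re\big(e^{-i\theta}f\big)$ is harmonic, maps $\mathbb{U}$ into $(-1,1)$, and satisfies $u(z_0)=|f(z_0)|$, so inequality (\ref{schwpickhar:fla2}) of Theorem \ref{th:schwpickhar} gives
\begin{equation*}
|\nabla u(z_0)|\leqslant\frac{4}{\pi}\,\frac{\cos\big(\tfrac{\pi}{2}|f(z_0)|\big)}{1-|z_0|^2};
\end{equation*}
(ii) $\nabla u=e^{i\theta}\overline{f_z}+e^{-i\theta}f_{\overline{z}}$ --- the same computation that appears in the paper's proof of Theorem \ref{th:main2} for $U=\cos\theta\,u+\sin\theta\,v=\re(e^{-i\theta}f)$, via Lemma \ref{lem:colonna} --- so the reverse triangle inequality yields $\ell(z_0)=\big||f_z(z_0)|-|f_{\overline{z}}(z_0)|\big|\leqslant|\nabla u(z_0)|$; (iii) since $J_f=|f_z|^2-|f_{\overline{z}}|^2$, the identity $|J_f|=\|df\|\cdot\ell$ holds for \emph{any} $C^1$ map (your parenthetical restriction to sense-preserving maps is unnecessary), so the defining inequality $\|df\|^2\leqslant K|J_f|$ gives $\|df(z_0)\|\leqslant K\ell(z_0)$, the division by $\|df(z_0)\|$ being harmless because the conclusion is trivial when $\|df(z_0)\|=0$. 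Chaining (i)--(iii) proves the stated bound, and your $\theta=0$ fallback disposes of the case $f(z_0)=0$. Conceptually, your reading of the mechanism is exactly right: the harmonic Schwarz--Pick estimate of Theorem \ref{th:schwpickhar} controls only the projection of $df$ onto the direction of $f(z_0)$, which dominates the smallest singular value, and quasiconformality is what converts that into control of the operator norm.
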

For further results related to harmonic quasiconformal and hyperbolic harmonic quasiconformal mappings we refer to interested reader to \cite{MMTopics,wan,MMRevue, XChenAFang,MMrckmFil, MKFil, MKMor} and literature cited there.


\textbf{Acknowledgement.}
The author is greatly indebted to Professor M.Mateljevi\'c for introducing in this topic and for many stimulating conversations. Also, the author wishes to express his thanks to Mijan Kne\v zevi\'c for useful comments related to this paper.




\begin{thebibliography}{99}

\bibitem{axler} S. Axler, P. Bourdon, W. Ramey, \emph{Harmonic function theory}, Springer Verlag, New York, 2000.

\bibitem{BeardonCarne} A.F. Beardon, T.K. Carne, \emph{A strengthening of the Schwarz-Pick Inequality}, Amer.
Math. Monthly, 99 (1992), 216-217.

\bibitem{BeardonMinda} A.F. Beardon and D. Minda, \emph{The Hyperbolic Metric and Geometric Function
Theory}, Proceedings of the International Workshop on Quasiconformal
Mappings and their Applications (New Delhi, India, 2007), Narosa Publishing House,
pp. 10-56.

\bibitem{burgeth} B. Burgeth, \emph{A Schwarz lemma for harmonic and hyperbolic-harmonic functions in higher dimensions}, Manuscripta Math. 77 (1992) 283-291.

\bibitem{hhChen1} H. H. Chen, \emph{The Schwarz-Pick lemma for planar harmonic mappings},  Sci. China Math.
 June 2011 Vol. 54 No. 6: 1101-1118.

\bibitem{hhChen} H. H. Chen, \emph{The Schwarz-Pick lemma and Julia lemma for real planar harmonic mappings},  Sci. China Math.  November 2013 Vol. 56 No. 11: 2327-2334.

\bibitem{XChenAFang} X. Chen, A. Fang, \emph{A Schwarz–Pick inequality for harmonic quasiconformal mappings and its
applications}, J. Math. Anal. Appl. 369 (2010) 22-28.

\bibitem{colonna} F. Colonna, \emph{The Bloch constant of bounded harmonic mappings}, Indiana Univ. Math. J.,
38 (1989), 829-840.

\bibitem{duren} P. Duren, \emph{Harmonic mappings in the plane}, Cambridge University Press, 2004.

\bibitem{heinz} E. Heinz, \emph{On one-to-one harmonic mappings}, Pacific J. Math. 9 (1959), 101-105.

\bibitem{heth}  H. W. Hethcote, \emph{Schwarz lemma analogues for harmonic functions},
Int. J.  Math.  Educ.   Sci.   Technol.,  Vol. 8, No. 1(1977), 65-67.

\bibitem{kavu}  D. Kalaj, M.  Vuorinen, \emph{On harmonic functions and the Schwarz lemma},  Proc. Amer. Math. Soc. 140 (2012), no. 1, 161-165.

\bibitem{khavinson} D. Khavinson, \emph{An extremal problem for harmonic
functions in the ball}, Canadian Math. Bulletin 35(2) (1992), 218-220.

\bibitem{MKFil} M. Kne\v zevi\'c, \emph{A Note on the Harmonic Quasiconformal Diffeomorphisms of the Unit Disc}, Filomat 29:2 (2015), 335-341.

\bibitem{MKMor} M. Kne\v zevi\'c, \emph{On the Theorem of Wan for $K-$Quasiconformal Hyperbolic Harmonic Self Mappings of the Unit Disk}, Mathematica Moravica Vol. 19-1 (2015), 81-85.

\bibitem{MKandMM} M. Kne\v zevi\' c, M. Mateljevi\' c, \emph{On the quasi-isometries of harmonic quasi-conformal
mappings}, J. Math.  Anal.  Appl,  2007, 334(1), 404-413.

\bibitem{krantz} S. G. Krantz, \emph{Geometric Function Theory: Explorations in Complex Analysis}, Birkh\"auser Boston 2006.

\bibitem{MMar} M. Markovi\'c, {\it On harmonic functions and the hyperbolic metric}, Indag. Math. 26 (2015) 19-23.

\bibitem{MMRevue} M. Mateljevi\'c, \emph{Distortion of harmonic functions and harmonic quasiconformal quasi-isometry}, Revue Roum. Math. Pures Appl. 51 (2006) 711-722.

\bibitem{MMTopics} M. Mateljevi\'c, \emph{Topics in Conformal, Quasiconformal and Harmonic Maps}, Zavod za ud\v zbenike, Beograd, 2012.

\bibitem{MMrckmFil} M.  Mateljevi\'c, \emph{The Lower Bound for the Modulus of the
Derivatives and Jacobian of Harmonic Injective Mappings}, Filomat 29:2(2015), 221-244.

\bibitem{MMSchw_Kob} M. Mateljevi\'c, \emph{Schwarz lemma and Kobayashi metrics for harmonic and holomorphic functions}, J. Math. Anal. Appl. 464 (2018) 78-100.

\bibitem{MMandMS} M. Mateljevi\'c, M. Svetlik, \emph{Hyperbolic metric on the strip and the Schwarz lemma for HQR
mappings}, arXiv:1808.06647v1 [math.CV], http://arxiv.org/abs/1808.06647

\bibitem{MMandAK} M. Mateljevi\'c, A. Khalfallah, \emph{Schwarz lemmas for mappings with bounded Laplacian}, arXiv:1810.08823v1 [math.CV] 20 Oct 2018, http://arxiv.org/abs/1810.08823

\bibitem{osserman} R. Osserman, \emph{A sharp Schwarz inequality on the boundary}, Proc. Amer.
Math. Soc. 128(2000), 3513–3517.

\bibitem{pavlovic} M. Pavlovi\'c, Introduction to function spaces on the disk. Posebna Izdanja, 20, Matemati\v cki
Institut SANU, Belgrade, 2004.
\bibitem{wan} T. Wan, {\it Constant mean curvature surface, harmonic maps, and
universal Teichm\" uller space}, J. Diff. Geom. 35 (1992) 643-657.


\end{thebibliography}
\end{document}